\title[Title] 
{On the geometry of \\~\\ quantum spheres and hyperboloids} 
\date{February 2024}
\author{Giovanni Landi, Chiara Pagani}
\address[]{\textit{Giovanni Landi}  \newline \indent 
Matematica, Universit\`a di Trieste, \newline \indent
Via A. Valerio, 12/1, 34127  Trieste, Italy \newline \indent
and INFN, Trieste, Italy}
\email{landi@units.it}
\address[]{\textit{Chiara Pagani} 
\newline \indent    Alma Mater Studiorum Universit\`a di Bologna,
\newline \indent Dipartimento di
Matematica,  Piazza di Porta S. Donato, 5, 40126 Bologna, Italy.}
\email{c.pagani@unibo.it ,  cpagani@units.it}
\numberwithin{equation}{section}
\theoremstyle{plain}
\newtheorem{thm}{Theorem}[section]
\newtheorem{lem}[thm]{Lemma}
\newtheorem{prop}[thm]{Proposition}
\theoremstyle{definition}
\newtheorem{rem}[thm]{Remark}
\newcommand{\nn}{\nonumber}
\newcommand{\ot}{\otimes}
\newcommand{\beq}{\begin{equation}}
\newcommand{\eeq}{\end{equation}}
 \newcommand{\id}{\mathrm{id}}
\newcommand{\IC}{\mathbb{C}}
\newcommand{\IN}{\mathbb{N}}
\newcommand{\IR}{\mathbb{R}}
\newcommand{\IZ}{\mathbb{Z}}
\newcommand{\half}{{\frac{1}{2}}}
\newcommand{\II}{\mathrm{I}}
\newcommand{\ii}{\mathrm{i}}
\newcommand{\cA}{A}
\newcommand{\cU}{\mathcal{U}}
\newcommand{\cO}{\mathcal{O}}
\newcommand{\cC}{\mathcal{C}}
\DeclareMathOperator{\chern}{ch}
\DeclareMathOperator{\tr}{tr}
\DeclareMathOperator{\M}{Mat}
\newcommand{\s}{\mathsf{s}}
\newcommand{\slq}{\mathcal{U}_{q}(sl_2)}
\newcommand{\slqh}{\mathcal{U}_{q^{1/2}}(sl_2)}
\newcommand{\uqh}{\mathcal{U}_{q^{1/2}}}
\newcommand{\qvs}{\IC^3_q}
\newcommand{\aqvs}{\cO(\IC^3_q)}
\newcommand{\zero}[1]{{#1}_{\scriptscriptstyle{(0)}}}
\newcommand{\one}[1]{{#1}_{\scriptscriptstyle{(1)}}}
\newcommand{\mone}[1]{{#1}_{\scriptscriptstyle{(-1)}}}
\newcommand{\qvsN}{\IC^N_q}
\def\oq#1{O_q({#1})}
\def\soq#1{SO_q({#1})}
\def\aoq#1{\cO(O_q({#1}))}
\def\asoq#1{\cO(SO_q({#1}))}
\def\asphf{\cO(S^2_q)}  
\def\ahypf{\cO(H^2_q)}  
\def\sphf{S^2_q}  
\def\hypf{H^2_q}
\def\asph{\cO(S^2_{q,Gr})}           
\def\ahyp{\cO(H^2_{q,Gr})}           
\def\sph{S^2_{q,Gr}}           
\def\hyp{H^2_{q,Gr}}           
\def\bra#1{\left\langle #1\right|}
\def\ket#1{\left| #1\right\rangle}
\def\hs#1#2{\left\langle #1,#2\right\rangle}
\def\u#1#2{u_{#1 #2}}
\def\v#1#2{\tilde{u}_{#1#2}}
\def\x#1{x_{#1}}
\def\y#1{y_{#1}}
\def\X#1{X_{#1}}
\def\Y#1{Y_{#1}}
\def\hu#1#2{\widehat{u}_{#1 #2}}
\begin{document}

%\subjclass[2010]{Primary ; Secondary }
%\keywords{...}

\begin{abstract}
We study two classes of quantum spheres and hyperboloids which are $*$-quantum spaces for the  quantum orthogonal group   $\mathcal{O}(SO_q(3))$.
We construct line bundles over the quantum homogeneous space of invariant elements 
for the quantum subgroup $SO(2)$ of $SO_q(3)$.
These are associated to the quantum principal bundle via corepresentations of $SO(2)$ and are given by finitely-generated projective modules $\mathcal{E}_n$ of rank $1$ and even degree $-2n$. The corresponding idempotents, representing classes in K-theory, are explicitly worked out. 
For $q$ real, we diagonalise the Casimir operator of the 
Hopf algebra ${\mathcal{U}_{q^{1/2}}(sl_2)}$ dual to $\mathcal{O}(SO_q(3))$.
\end{abstract}

\maketitle

\tableofcontents
\parskip = .75 ex
\allowdisplaybreaks[4]

\section{Introduction}
This paper is part of a scientific programme which deals with Laplacian operators on quantum  spaces with quantum group symmetries. Here we study two classes of quantum spheres and hyperboloids with 
symmetry from the quantum orthogonal group $\soq{3}$.  

In the approach of \cite{frt}, the quantized algebra of functions  $\asoq{N}$ on the quantum orthogonal group in any dimension is given
as the algebra generated by $N^2$ elements subject to commutation relations that 
depend on the entries of a matrix $R$ which is a solution of the quantum Yang--Baxter equation.
The matrix $R$ decomposes in terms of projections and this allows one to introduce 
quantum spaces carrying natural coactions of the quantum group $\asoq{N}$ (see \S \ref{sec:qvs}). 

When restricting to $\asoq{3}$, a first class of quantum spheres and hyperboloids is obtained as real forms of the quantum vector  spaces of  $\asoq{3}$ associated with  the $q$-symmetrizer projection $P_-$ in the decomposition of the $R$-matrix alluded to before.  The nature of the quantum space is determined by the $*$-structure: for $q \in \IR$ one gets a sphere  --- the equatorial Podle\'s sphere, while for $|q|=1$ an hyperboloid.  

A second class, described in \S \ref{sec:qhs}, is given by quantum homogeneous $\asoq{3}$--spaces arising from the coaction of the quantum subgroup $SO(2)$ of $\soq{3}$ on the latter.
Again, the $*$-structure discriminates  between a quantum $2$-sphere --- now the standard Podle\'s sphere, and an hyperboloid. In both cases, the quantum homogenous space, given as the subalgebra $B$ of coinvariants of $\asoq{3}$ for the right coaction of $SO(2)$, is explicitly determined.
This also makes use of the identification of 
$\mathcal{O}(SL_{\s}(2))$, for $\s=q^\half$, as the `double covering' of $\asoq{3}$, that is of
the existence of a
 Hopf algebra isomorphism between the coordinate algebra $\mathcal{O}(SO_{q}(3))$ and the subalgebra of $\mathcal{O}(SL_{\s}(2))$ made of invariant elements for the action of the group $\IZ_2$ (see \S \ref{sec:dc}).

The algebra extension $B \subset \asoq{3}$ is shown to be an $SO(2)$ quantum principal bundle (a Hopf--Galois  extension). 
This quantum principal bundle has associated (modules of sections of) line bundles coming from the corepresentations of $SO(2)$. The module are given by finitely-generated projective modules $\mathcal{E}_n$  of rank $1$ and even degree $-2n$.  The corresponding 
idempotents  $p_n \in \M_{| 2n | +1}(B)$, describing classes in the K-theory of the algebra  $B$, are explicitly worked out. 
These idempotents are different from those usually used for Podle\'s sphere, a fact that reflects in a simpler recursion formula for their trace and thus for an easier computation of their degree (Proposition \ref{prop:pn}).

 For the study of Laplacian operators on the two $*$-quantum homogeneous spaces  of $\asoq{3}$ and of `gauged' Laplacian operators on bundles over them (in the line of \cite{lrz}),  the last section of the paper is dedicated to the  study of the quantum 
Casimir element of $\slqh$, the Hopf algebra dual to the Hopf algebra $\asoq{3}$. For $q$ real, the Casimir operator, which acts on the left on $B$ and on lines bundles over the latter, is diagonalised via the commuting right action of 
$\slqh$ (Theorem \ref{cC-diag}).
 
\section{The quantum special orthogonal groups  $\soq{N}$} \label{qogn}

We recall 
 the construction of the coordinate algebra $\aoq{N}$ of the quantum orthogonal group $\oq{N}$; see e.g. \cite[\S 9.3]{ks}.
Let $q \in \IC $, $q \neq 0$, fixed. Let $N$ be an integer. For each index $i=1,\dots ,N$, let 
$i'=N+1-i $ and define $\rho_i=\frac{N}{2}-i  $ if $i <i'$, with $\rho_{i'}=- \rho_i$ and  $\rho_i=0$ if $i = i'$.
For all indices $i,j,m,n=1, \dots, N$ we define complex numbers 
\beq\label{Rmatrix}
R^{ij}_{mn}=q^{\delta_{ij}-\delta_{ij'}} \delta_{im}\delta_{jn} + (q-q^{-1}) \theta(i-m)
(\delta_{jm}\delta_{in}-q^{-\rho_j-\rho_m}\delta_{ij'}\delta_{nm'})
\eeq
where $\theta$ is the Heaviside function, whose value is one for strictly positive argument and zero otherwise.
We then consider the free algebra $\IC \langle \u{i}{j} \rangle$ generated over $\IC$ by $N^2$ elements $\u{i}{j} $, $i,j=1, \dots, N$, modulo the two-sided ideal generated by elements
\beq\label{rtt}
R^{ji}_{kl} \u{k}{m}  \u{l}{n} - \u{i}{k}  \u{j}{l}  R^{lk}_{mn} \; , \quad  ~i,j,m,n=1,\dots,N \;.
\eeq
Explicitly, the quotient algebra, that we denote by $\mathcal{O}(R)$, is generated by elements $\u{i}{j}$ subject to relations
\begin{multline}\label{rtt-exp}
q^{\delta_{ij}-\delta_{ij'}} \u{j}{m}  \u{i}{n}  = ~  q^{\delta_{mn}-\delta_{mn'}}  \u{i}{n} \u{j}{m} 
+ \lambda \left(\theta(n-m)- \theta(j-i)\right) \u{i}{m}  \u{j}{n} 
\\  + \lambda \delta_{ij'} \sum_k \theta(j-k)  q^{-\rho_i-\rho_k}\u{k}{m}  \u{k'}{n}   
- \lambda \delta_{nm'} \sum_k \theta(k-m)  q^{-\rho_m-\rho_{k'}}\u{i}{k'}  \u{j}{k} \, , 
\end{multline}
where we set $\lambda:= q - q^{-1}$. In concise matrix notations, $\mathcal{O}(R)$ is the algebra generated by the entries of the $N \times N$ matrix $u=(\u{i}{j} )$ with relations
\beq\label{rtt-matrix}
R u_1 u_2=u_2 u_1 R \; ,
\eeq
for $R$ the  $N^2 \times N^2$ matrix of entries  $R=(R^{ij}_{mn})$ (where $i$ is a row block index, $m$ a column block index  and $j,n$ are respectively the row and column index inside each block) and   
$u_1=u \ot \II$, $u_2 = \II \ot u$ with $\II$ the unit matrix.

The algebra $\aoq{N}$ is then the quotient algebra of  $\mathcal{O}(R)$ which is obtained by imposing the generators $\u{i}{j}$ to satisfy the additional  orthogonality (metric) condition
\beq\label{metric}
uCu^tC^{-1} = \II = C^{-1}u^tC u \, , 
\eeq
with matrix 
\beq
C=(C_{k j}) \, ,  \quad C_{k j} = \delta_{k j'} q^{-\rho_k} 
\eeq
(and then $C=C^{-1}$). 
In the classical case the condition \eqref{metric} is just the metric condition defining the complex Lie group $\cO(N,\IC)$. 

 The metric condition \eqref{metric} corresponds to one single additional relation $Q_q-1=0$ (see \cite[page 319]{ks}), where $Q_q$ can equivalently be expressed in terms of any index $j$ as 
\beq\label{condq}
Q_q= \sum_k C_{j' j} C_{k k'} \u{k}{j}  \u{k'}{j'} = \sum_k C_{j' j} C_{k k'} \u{j}{k} \u{j'}{k'} \, .
\eeq

The algebra $\aoq{N}$ is a Hopf algebra with coproduct $\Delta$, counit $\varepsilon$ and  antipode $S$ given 
on generators respectively by
\beq
\Delta(\u{k}{j})= \sum_m \u{k}{m}  \ot \u{m}{j}  \, ,  \quad \varepsilon(\u{k}{j})= \delta_{ij}
\, ,  \quad
S(\u{k}{j})= q^{\rho_j-\rho_k} \u{j'}{k'}
\eeq
or in matrix notation 
$$
\Delta(u)= u \ot u 
\, ,  \quad \varepsilon(u)=\II \, ,  \quad 
S(u)= Cu^t C^{-1} \, .
$$

\subsection{Real forms}
The coordinate algebra $\aoq{N}$ of the quantum orthogonal group $\oq{N}$ admits different $*$-structures $*: \aoq{N} \to \aoq{N}$, resulting in different real forms (see \cite[\S 9.3.5]{ks}).
For the present paper we consider the following two choices:

For $q \in \IR$, define
\beq\label{*-struct1}
(\u{j}{k})^*:= S(\u{k}{j})=q^{\rho_j-\rho_k} \u{j'}{k'} .
\eeq
The resulting Hopf $*$-algebra is the coordinate algebra $\cO(O_q(N, \IR))$ of the compact quantum group $O_q(N, \IR)$, with  defining matrix $u$ which is unitary, $uu^\dag=\II=u^\dag u$, with $(u^\dag)_{kj}:=(\u{j}{k})^*= 
S(\u{k}{j})$. 

For $|q|=1$, define
\beq\label{*-struct2}
(\u{j}{k})^*:= \u{j}{k} .
\eeq
The resulting Hopf $*$-algebra is the coordinate algebra $\cO(O_q(n,n))$  
of the real  quantum group $O_q(n,n)$ for $N=2n$ even, or $\cO(O_q(n,n+1))$ of the real  quantum group $O_q(n,n+1)$ for $N=2n+1$ odd.

\subsection{Quantum spaces and exterior algebras}
We  recall from \cite{frt} (see also \cite[\S 8.4.3, \S 9.1.2]{ks}) that the matrix $R$ satisfies a cubic equation,
$$
(\widehat{R} -q \II)(\widehat{R} +q^{-1} \II)(\widehat{R} -q^{1-N} \II)=0 \, .
$$
in terms of the matrix $\widehat{R}=  (\widehat{R}^{kj}_{mn}) :=(R^{jk}_{mn})$. 
Moreover for $N>2$, and assuming 
$(1+q^2)(1+q^{-1})(1-q^{-3}) \neq 0$, a condition that implies in particular that $q - q^{-1} \neq 0$, the matrix $\widehat{R}$ is semisimple and can be decomposed as 
\beq\label{decr}
\widehat{R}=q P_+ -q^{-1} P_- +q^{1-N} P_0 \; , \quad 
\eeq
with $P_\alpha$, $\alpha = \pm, 0$ mutually orthogonal idempotents:
$P_\alpha^2=P_\alpha $, and $P_\alpha P_\beta=0$, for  $\alpha \neq \beta$.
In the decomposition \eqref{decr}, the matrix $P_-$ is 
a $q$-symmetrizer matrix on $\IC^N \times \IC^N$
and is used to define  a quantum  space $$V=\qvsN:= \IC \langle \x{m}\rangle /{\langle (P_-)^{jl}_{mn} \x{m}\x{n} \rangle} \;  , \qquad m,n,j,l=1, \dots N,
$$
while $P_+$ and $P_0$ are used to define a quantized orthogonal exterior algebra 
\beq\label{extV}
\Lambda_q(V):= \IC \langle e_{m}\rangle /{\langle (P_+)^{jl}_{mn} e_{m}e_{n},~~
(P_0)^{jl}_{mn} e_{m}e_{n} \rangle} \; .
\eeq
 Both $V$ and $\Lambda_q(V)$ carry a left coaction of $\aoq{N}$ 
 given by the algebra morphisms 
 $$
 x_j \mapsto \sum_k \u{j}{k} \ot x_k \; , \quad e_j \mapsto \sum_k \u{j}{k} \ot e_k .
 $$
In particular, the subspace of $\Lambda_q(V)$ made of degree $N$ polynomials is one-dimensional and thus there exists a unique element $D_q(u)\in \aoq{N}$ such that for each element $\xi$ in  
 $\Lambda_q(V)$ of degree $N$, the coaction is simply given by $\xi \mapsto D_q(u) \ot \xi$.
The element $D_q(u)$ is called the quantum determinant of the matrix $u$. It is shown to belong to the centre of the algebra  $\aoq{N}$ and to be group-like, that is $\Delta(D_q(u))= D_q (u)\ot D_q(u)$ and $\varepsilon(D_q(u))= 1$.

The two-sided ideal generated by $\langle D_q(u)-1 \rangle$ is a Hopf ideal of $\aoq{N}$ and the  quotient  Hopf algebra $\aoq{N}/ \langle D_q(u)-1 \rangle$ is called the coordinate algebra $\asoq{N}$ of the special orthogonal quantum group $\soq{N}$.  

\section{The quantum orthogonal group $\soq{3}$} 
We specialize the above to the case 
$N=3$. For each index $i=1,2,3$, one has $i'=3-i$ so that  $1'=3$, $2'=2$ and 
$\rho_1=\half \; , \; \rho_2=0 \; , \; \rho_3=-\half$. 
The matrix $R=(R^{kj}_{mn})$ is the lower-diagonal matrix  
\beq\label{Rmat3}
{R}= \left(
\begin{array}{ccc|ccc|ccc}
q &  &  &&&&&&
\\
0 & 1 &  &&&&&&
\\
0 &0 & q^{-1} &  &  &&&&
\\
\hline
0 & \lambda & 0 &1&&&&&
\\
0 & 0 & -q^{\half} \lambda &0&1&&&&
\\
0 &0 & 0 & 0 & 0 &1&&&
\\
\hline
0 & 0 & 0 &0&-q^{\half} \lambda&0&q^{-1}&&
\\
0 & 0 & 0 &0&0& \lambda&0&1&
\\
0 &0 & 0 & 0 & 0 &0&0&0&q
\end{array}\right)  
\eeq
(where $\lambda= q - q^{-1}$ as before)
 with non-zero entries
$$
\begin{array}{lllll}
& R^{11}_{11}= R^{33}_{33}=q  \qquad & R^{13}_{13}= R^{31}_{31}=q^{-1}
 \quad 
 & R^{22}_{22}=1  \qquad &  R^{12}_{12}= R^{21}_{21}= R^{23}_{23}= R^{32}_{32}=1 \\
  & R^{21}_{12}= \lambda   & R^{22}_{13}= - q^{\half} \lambda 
  & R^{32}_{23}= \lambda    & R^{31}_{22}= - q^{\half} \lambda .
\end{array}
$$

According to the general theory, the Hopf algebra $\aoq{3}$ is
the  free algebra generated  by elements $\u{i}{j}$, $i,j=1,2,3$ modulo the ideal of  relations \eqref{rtt}
and \eqref{metric} (or \eqref{condq}), 
$\langle R u_1 u_2-u_2 u_1 R\, , \;
uCu^tC^{-1}-\II \, ;  \; C^{-1}u^tC u=\II \rangle$. In matrix form the antipode is
$$S(u)= Cu^t C^{-1}
=
\begin{pmatrix}
\u{3}{3}  & q^{-\half}  \u{2}{3}  & q^{-1}\u{1}{3} 
\\
q^{\half}  \u{3}{2}  & \u{2}{2}  & q^{-\half} \u{1}{2} 
\\
q~ \u{3}{1}  & q^{\half}  \u{2}{1}  & \u{1}{1}  
\end{pmatrix} \, .
$$

\subsection{The quantum determinant}\label{sec:q-det}
From the decomposition \eqref{decr} of the matrix in \eqref{Rmat3}, one gets  a quantum  space $V=\qvs$, and an exterior algebra $\Lambda_q(V)$, both carrying a right coaction of $\aoq{3}$.
We will  return to $\qvs$ in \S \ref{sec:qvs} below. Here we consider the 
exterior algebra $\Lambda_q(V)$ in \eqref{extV}, which allows one to define the  quantum determinant $D_q(u)$. 

The graded algebra $\Lambda_q(V)$ is generated in degree one by  elements $e_1,e_2,e_3$ with relations 
\beq\label{comm-e}
\begin{array}{lll}
 (e_1)^2  =0\, ,  \; &(e_3)^2=0 \, ,  \; &(e_2)^2=(q^\half - q^{-\half}) e_1 e_3~
; \nn \\
 e_3  e_2  =- q e_2 e_3\, ,  \;
& e_3  e_1=- e_1 e_3\, ,  \; & e_2  e_1=- q e_1 e_2\, ,  
\end{array}
\eeq
and coaction of $\aoq{3}$ given by $\rho: e_j \mapsto \sum \u{j}{k} \ot e_k$ on the generators and  extended  to the whole $\Lambda_q(V)$ as an algebra map. 
Out of the commutation relations \eqref{comm-e}, it follows that in degree three all elements are proportional:
$$ e_k e_m e_n= \varepsilon_{kmn} ~w \qquad \mbox{ for (say) } w:=e_1 e_2 e_3 , \quad \forall k,l,m=1,2,3.
$$ The only non zero components of the tensor $\varepsilon$ are found to be
\begin{align}\label{tensor-epsilon}
\varepsilon_{123}=1  \, ,  \;  \varepsilon_{132}=-q  \, ,  \; \varepsilon_{213}=-q  \, ,  \; 
\varepsilon_{231}=q  \, ,  \; \nn \\ \varepsilon_{312}=q  \, ,  \;  \varepsilon_{321}=-q^2  \, ,  \; 
\varepsilon_{222}=-q (q^\half - q^{-\half})  \, .
\end{align} 
Hence  there exists a unique element $D_q(u) \in \aoq{3}$ such that 
$\rho(\xi) = D_q(u) \ot \xi$ for each $\xi$ monomial in $\Lambda_q(V)$ of degree three. 
For $\xi=w=e_1 e_2 e_3$ one promptly obtains  the following explicit formula for 
the  quantum determinant 
$D_q(u)$:
\begin{align}\label{det}
D_q(u)= &~ \u{1}{1} \u{2}{2} \u{3}{3}  -q \u{1}{2} \u{2}{1} \u{3}{3}  -q\u{1}{1} \u{2}{3} \u{3}{2}  
+q \u{1}{2} \u{2}{3} \u{3}{1}  \nn \\
& \qquad + q \u{1}{3} \u{2}{1} \u{3}{2}  -q^2\u{1}{3} \u{2}{2} \u{3}{1} 
-q (q^\half-q^{-\half})\u{1}{2} \u{2}{2} \u{3}{2}  \, .
\end{align}
The  quotient  Hopf algebra $\aoq{3}/ \langle D_q(u)-1 \rangle$ is   the coordinate algebra $\asoq{3}$ of the special orthogonal quantum group $\soq{3}$.

The determinant $D_q(u)$ admits different equivalent expressions as a degree three polynomial on the generators $\u{j}{k}$ of $\aoq{3}$: for each triple of indices $a,b,c=1,2,3$ such that $\varepsilon_{abc} \neq 0$, being $\rho: e_j \mapsto \sum_k \u{j}{k} \ot e_k$, one computes
$$
\rho(e_a e_b e_c)= \sum_{m,n,p} \u{a}{m}\u{b}{n}\u{c}{p}  \ot e_m e_n e_p
=\sum_{m,n,p} \u{a}{m}\u{b}{n}\u{c}{p}  \ot \varepsilon_{mnp} w
$$
and therefore, 
\beq\label{cofactors}
D_q(u)= \sum_m \u{a}{m} \hu{m}{a} \; , \qquad \text{with} \qquad
\hu{m}{a}:= \varepsilon_{abc}^{-1} \sum_{n,p} \varepsilon_{mnp}  \u{b}{n}\u{c}{p}  \;.
\eeq
We refer to this formula $D_q(u)= \sum_m \u{a}{m} \hu{m}{a} $ as the expansion of $D_q(u)$ with respect to the $a$-row and we call the element $\hu{m}{a} $ the cofactor of $\u{m}{a}$ and 
$\mathsf{cof}(\u{}{}):={^t\hu{}{}}$ the matrix of cofactors.
Notice that each cofactor $\hu{m}{a} $ admits more than one expression, one for each possible choice of indices $b,c$ such that $\varepsilon_{abc} \neq 0$: for each $m=1,2,3$ one computes
\begin{align*}
\hu{m}{1} &= \sum_{n,p}  \varepsilon_{mnp} \u{2}{n}\u{3}{p}  =
-q^{-1} \sum_{n,p} \varepsilon_{mnp}  \u{3}{n}\u{2}{p} 
\\
\hu{m}{2} &= -q^{-1} \sum_{n,p} \varepsilon_{mnp}  \u{1}{n}\u{3}{p}  =
q^{-1} \sum_{n,p} \varepsilon_{mnp}  \u{3}{n}\u{1}{p}  =
-q^{-1} (q^\half -q^{-\half})^{-1} \sum_{n,p} \varepsilon_{mnp}  \u{2}{n}\u{2}{p} 
\\
\hu{m}{3} &= q^{-1} \sum_{n,p} \varepsilon_{mnp}  \u{1}{n}\u{2}{p}  =
-q^{-2} \sum_{n,p} \varepsilon_{mnp}  \u{2}{n}\u{1}{p} \, .
\end{align*}
We explicitly list all of them in Appendix~\ref{app:B}.

The matrix $\widehat{u}$ of cofactors can be identified with the antipode matrix. For this we need the following result for which we use explicit commutation relations of the type 
\eqref{rtt-exp} with the matrix \eqref{Rmat3} as well as the orthogonality conditions.
\begin{prop}\label{pro:cof}
Let $\widehat{u}= (\hu{j}{k})_{j,k=1,2,3}$ be the transpose of the matrix
of  cofactors, $\hu{m}{a}=\mathsf{cof}(\u{}{})_{am}$. Then 
$
u \widehat{u}= D_q(u) \II
$.
\end{prop}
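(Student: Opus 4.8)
The plan is to read off the diagonal and off-diagonal entries of $u\widehat{u}$ from a single identity expressing a $q$-antisymmetrised product of three rows of $u$ in terms of $D_q(u)$. Writing out the matrix product, the $(a,b)$-entry is $(u\widehat{u})_{ab}=\sum_m \u{a}{m}\hu{m}{b}$. The diagonal entries are already in hand: by the very definition of the cofactors, formula \eqref{cofactors} gives $(u\widehat{u})_{aa}=\sum_m \u{a}{m}\hu{m}{a}=D_q(u)$ for every $a$. So the real content of the Proposition is the vanishing of the off-diagonal entries, $\sum_m \u{a}{m}\hu{m}{b}=0$ for $a\neq b$.

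First I would establish the key identity
\beq\label{master}
\sum_{m,n,p}\varepsilon_{mnp}\,\u{a}{m}\u{c}{n}\u{d}{p} = \varepsilon_{acd}\, D_q(u) \qquad \text{for all } a,c,d=1,2,3 .
\eeq
This comes directly from the coaction $\rho$ on the exterior algebra $\Lambda_q(V)$, without any case-by-case manipulation of commutators. Since $\rho$ is an algebra map with $\rho(e_j)=\sum_k \u{j}{k}\ot e_k$, applying it to the degree-three monomial $e_a e_c e_d$ and using $e_m e_n e_p=\varepsilon_{mnp}\, w$ (with $w=e_1 e_2 e_3$) gives $\rho(e_a e_c e_d)=\big(\sum_{m,n,p}\varepsilon_{mnp}\,\u{a}{m}\u{c}{n}\u{d}{p}\big)\ot w$. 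On the other hand $e_a e_c e_d=\varepsilon_{acd}\, w$ in $\Lambda_q(V)$ and $\rho(w)=D_q(u)\ot w$ by the defining property of the quantum determinant, so $\rho(e_a e_c e_d)=\varepsilon_{acd}\, D_q(u)\ot w$. Comparing the two and using that $w$ spans the one-dimensional top degree yields \eqref{master}. Note that the well-definedness of $\rho$ on $\Lambda_q(V)$ is exactly where the $R$-matrix relations \eqref{rtt-exp} for the matrix \eqref{Rmat3} enter, but they do so only once and implicitly.

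With \eqref{master} in place the Proposition is immediate. For a fixed column index $b$, choose the two indices $c,d$ complementary to $b$ (so that $\{b,c,d\}=\{1,2,3\}$ and $\varepsilon_{bcd}\neq 0$) and use the corresponding expression $\hu{m}{b}=\varepsilon_{bcd}^{-1}\sum_{n,p}\varepsilon_{mnp}\,\u{c}{n}\u{d}{p}$ from \eqref{cofactors}. Then
\beq
(u\widehat{u})_{ab}=\sum_m \u{a}{m}\hu{m}{b}=\varepsilon_{bcd}^{-1}\sum_{m,n,p}\varepsilon_{mnp}\,\u{a}{m}\u{c}{n}\u{d}{p}=\varepsilon_{bcd}^{-1}\,\varepsilon_{acd}\, D_q(u) ,
\eeq
where the last step is \eqref{master}. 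For $a=b$ this returns $D_q(u)$, recovering the diagonal case, while for $a\neq b$ one has $a\in\{c,d\}$, so two of the three indices of $\varepsilon_{acd}$ coincide and, by the explicit list \eqref{tensor-epsilon}, $\varepsilon_{acd}=0$; hence $(u\widehat{u})_{ab}=0$.

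The one point that needs care -- and the only place where the peculiar shape of the tensor $\varepsilon$ matters -- is the vanishing $\varepsilon_{acd}=0$ for the off-diagonal terms. Here the deformed tensor is not simply the alternating symbol: besides the six permutations of $(1,2,3)$ it carries the extra nonzero component $\varepsilon_{222}$. I would therefore check explicitly that the complementary choice of $(c,d)$ keeps us away from this exceptional entry, i.e. that with $c\neq d$ and $a\in\{c,d\}$ the triple $(a,c,d)$ always has exactly one repeated index and never all three equal, so that $\varepsilon_{acd}$ is genuinely zero. This is the main, and essentially the only, obstacle; once the index bookkeeping is done the argument is routine. A more computational alternative, closer to what the preamble to the statement suggests, is to expand $(u\widehat{u})_{ab}$ directly via the relations \eqref{rtt-exp} for the matrix \eqref{Rmat3} together with the orthogonality conditions \eqref{metric}; the coaction route above is preferable in that it isolates the single structural identity \eqref{master} and avoids explicit use of orthogonality altogether.
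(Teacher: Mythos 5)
Your proposal is correct, and it takes a genuinely different route from the paper. The paper proves the off-diagonal vanishing $\sum_m \u{d}{m}\hu{m}{a}=0$, $d\neq a$, by a lengthy case-by-case computation in Appendix~\ref{app:A}, manipulating the explicit RTT relations \eqref{rtt-exp} row by row (with the row $d=2$ requiring a separate, more involved treatment). You instead push the coaction argument --- which the paper itself already uses to obtain the diagonal identity $D_q(u)=\sum_m \u{a}{m}\hu{m}{a}$ for triples with $\varepsilon_{abc}\neq 0$ --- one step further: since $\rho$ is an algebra map on $\Lambda_q(V)$ and $e_m e_n e_p=\varepsilon_{mnp}\,w$ for \emph{all} triples, the identity $\sum_{m,n,p}\varepsilon_{mnp}\,\u{a}{m}\u{c}{n}\u{d}{p}=\varepsilon_{acd}\,D_q(u)$ holds for all $a,c,d$, including those with $\varepsilon_{acd}=0$ (where $e_ae_ce_d=0$ in $\Lambda_q(V)$), and the off-diagonal entries of $u\widehat{u}$ are exactly of this form. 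Your index check is also right: with $\{b,c,d\}=\{1,2,3\}$ and $a\neq b$ the triple $(a,c,d)$ has exactly one repeated index and is never $(2,2,2)$, so the exceptional component $\varepsilon_{222}$ is never hit and $\varepsilon_{acd}=0$. What your approach buys is uniformity and brevity; what it costs is that it leans on two facts imported from the FRT setup and only asserted in the paper --- that the coaction descends to an algebra map on $\Lambda_q(V)$ and that the top degree is one-dimensional with $w\neq 0$ --- whereas the paper's computation is self-contained at the level of generators and relations and, as a by-product, produces the explicit quadratic identities (collected in Appendix~\ref{app:B}) that are reused later in the determination of the coinvariant subalgebra $B$.
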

\begin{proof}
The lengthy proof is in Appendix~\ref{app:A}.
\end{proof}

As  a direct consequence of this proposition (and of the uniqueness of the antipode), in the quotient algebra $\soq{3}=\aoq{3}/ \langle D_q(u)-1 \rangle$ we can then identify  
the matrix $\widehat{u}= (\hu{j}{k})_{j,k=1,2,3}$ of cofactors with the antipode matrix: 
\beq\label{S=cofactor}
S(u)= 
\begin{pmatrix}
\u{3}{3}  & q^{-\half}  \u{2}{3}  & q^{-1}\u{1}{3} 
\\
q^{\half}  \u{3}{2}  & \u{2}{2}  & q^{-\half} \u{1}{2} 
\\
q~ \u{3}{1}  & q^{\half}  \u{2}{1}  & \u{1}{1}  
\end{pmatrix}  =
S(u) u \widehat{u}  = \widehat{u}=
 ~^t\mathsf{cof}(u) \, .
\eeq
In particular, for later use in the  study of coinvariant elements in \S \ref{sec:fibr-princ} below,  we observe we have the following identification among elements of the second column of the matrix $u$ (or second raw of the matrix $S(u)$) and  the corresponding cofactors:
\begin{align}\label{2ccof} 
q^{-\half} \u{1}{2} &= - \u{1}{1}\u{2}{3} + \u{1}{3} \u{2}{1}  -  (q^\half -q^{-\half}) \u{1}{2}\u{2}{2}
\\ \nn 
&=q^{-1} \u{2}{1}\u{1}{3} - q^{-1}\u{2}{3} \u{1}{1}  +  q^{-1} (q^\half -q^{-\half}) \u{2}{2}\u{1}{2} 
\\ \nn
\u{2}{2}  &= \u{1}{1}\u{3}{3}-  \u{1}{3}\u{3}{1}  + (q^\half -q^{-\half}) \u{1}{2}\u{3}{2} 
\\ \nn
&=
- \u{3}{1}\u{1}{3}+  \u{3}{3}\u{1}{1}  - (q^\half -q^{-\half}) \u{3}{2}\u{1}{2} \\ \nn
&= (q^\half -q^{-\half})^{-1} (\u{2}{1}\u{2}{3}-  \u{2}{3}\u{2}{1} + (q^\half -q^{-\half}) \u{2}{2}\u{2}{2} ) 
\\ \nn
q^{\half} \u{3}{2} &= -q  \u{2}{1}\u{3}{3} + q \u{2}{3} \u{3}{1}  -q   (q^\half -q^{-\half}) \u{2}{2}\u{3}{2} 
\\ \nn
&= \u{3}{1}\u{2}{3} - \u{3}{3} \u{2}{1}  +   (q^\half -q^{-\half}) \u{3}{2}\u{2}{2}  \; .
\end{align}

\subsection{Two real forms of $\soq{3}$}\label{sec:*struct}
As already mentioned above for general $N$,  the Hopf algebra $\aoq{3}$ can be equipped with different real structures \eqref{*-struct1} or \eqref{*-struct2}, depending on the deformation parameter $q$:
\begin{eqnarray}
(\u{j}{k})^*= S(\u{k}{j})= 
q^{\rho_j-\rho_k} \u{j'}{k'}, &&\mbox{ for }q \in \IR;  \label{*O3R}
\\
(\u{j}{k})^*= \u{j}{k}, &&\mbox{ for } |q|=1 ~ \label{*O3C} .
\end{eqnarray}
These lead to the Hopf $*$-algebras 
 $\cO(O_q(3, \IR))$  for $q \in \IR$ and  $\cO(O_q(1,2))$  for $|q|=1$.

Moreover, by direct verification, it is easy to check that
\begin{lem}
The exterior algebra  
$\Lambda_q(V)$ in \eqref{comm-e} is a $*$-algebra 
  with involution $*:\Lambda_q(V) \to \Lambda_q(V)$ defined on generators $e_k$, $k=1,2,3$ by
\begin{eqnarray}
&e_k^*=q^{\rho_k} e_{k'} & \mbox{ for } q \in \IR \, , \label{*extR}
\\
&e_k^*= e_{k} & \mbox{ for } |q|=1 \label{*extC} \, .
\end{eqnarray}
Then, for $q \in \IR$, respectively $ |q|=1$,  the coaction $\rho: \Lambda_q(V) \to\aoq{3} \ot \Lambda_q(V)$, $e_k \mapsto \sum_j u_{kj} \ot e_j$ is a $*$-map with respect to the 
 $*$-structures on $\aoq{3}$ defined in \eqref{*O3R}, respectively \eqref{*O3C}.
\end{lem}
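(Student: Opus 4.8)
The plan is to treat the statement as two independent direct verifications on the generators $e_1,e_2,e_3$: first that the prescribed rules extend to a genuine involution of $\Lambda_q(V)$ (an antilinear, anti-multiplicative map squaring to the identity), and second that the coaction $\rho$ intertwines this involution with the tensor-product $*$-structure $(a\ot\xi)^*=a^*\ot\xi^*$ on $\aoq{3}\ot\Lambda_q(V)$. In both parts it suffices to work on generators: an involution and an algebra coaction are determined on generators, and the identity $\rho(\xi^*)=\rho(\xi)^*$ propagates through products and scalars, since $\rho$ is multiplicative while $*$ is anti-multiplicative and antilinear on both sides.

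For the \emph{first part} I would begin with $(e_k^*)^*=e_k$. For $q\in\IR$ this uses $\rho_{k'}=-\rho_k$ and the reality of $q$, e.g. $(e_1^*)^*=(q^\half e_3)^*=q^\half q^{-\half}e_1=e_1$; for $|q|=1$ it is immediate. The substantive point is that the two-sided ideal generated by \eqref{comm-e} is $*$-invariant, so that $*$ descends to the quotient. I would check this by applying $*$ to each of the six defining relations and recognising the outcome as one of them. Since $*$ reverses products, each computation reshuffles a monomial and one must re-use the relations \eqref{comm-e} to identify the result; for instance, for $q\in\IR$ the relation $e_3e_2=-qe_2e_3$ produces under $*$ the identity $q^{-\half}e_2e_1=-q^{\half}e_1e_2$, which is exactly $e_2e_1=-qe_1e_2$. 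The only relation needing genuine care is $(e_2)^2=(q^\half-q^{-\half})e_1e_3$: for $|q|=1$ one must use $\bar q=q^{-1}$, so that $\overline{(q^\half-q^{-\half})}=q^{-\half}-q^{\half}$, and then invoke $e_3e_1=-e_1e_3$ to recover the relation with the correct sign.

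For the \emph{second part} I would compute, for each generator, the two sides of $\rho(e_k^*)=(\rho(e_k))^*$ and compare. For $q\in\IR$, on one side $\rho(e_k^*)=q^{\rho_k}\sum_j \u{k'}{j}\ot e_j$; on the other, using $(\u{k}{j})^*=q^{\rho_k-\rho_j}\u{k'}{j'}$ from \eqref{*O3R} together with $e_j^*=q^{\rho_j}e_{j'}$, one finds $(\rho(e_k))^*=\sum_j q^{\rho_k}\u{k'}{j'}\ot e_{j'}$. The powers $q^{\pm\rho_j}$ cancel, and relabelling the summation index by $l=j'$ (a permutation of $\{1,2,3\}$, since $1'=3$, $2'=2$, $3'=1$) turns the second expression into the first. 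For $|q|=1$ the identity is immediate because both $*$-structures \eqref{*O3C} and \eqref{*extC} fix the generators, so $\rho(e_k^*)=\rho(e_k)=\sum_j \u{k}{j}\ot e_j=(\rho(e_k))^*$.

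I do not expect a real obstacle: the argument is bookkeeping. The only places demanding attention are the interplay of antilinearity with $|q|=1$ in the quadratic relation for $e_2$, where $\bar q=q^{-1}$ must be used, and the index juggling in the second part, where one must track the weights $q^{\rho_k}$ and $q^{\rho_j}$ and apply the reindexing $j\mapsto j'$ correctly, so that the $\soq{3}$ $*$-structure and the exterior-algebra $*$-structure combine to the claimed result.
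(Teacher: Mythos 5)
Your proposal is correct and coincides with the paper's approach: the paper gives no written argument beyond ``by direct verification'', and your generator-by-generator check — $*$-invariance of the ideal of relations \eqref{comm-e} (with the sign bookkeeping for $(e_2)^2$ and $\bar q=q^{-1}$ when $|q|=1$) and the cancellation of the weights $q^{\pm\rho_j}$ with the reindexing $j\mapsto j'$ for the coaction — is exactly that verification, carried out correctly.
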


\begin{lem}
For $q \in \IR$, respectively $|q|=1$, the quantum determinant $D_q(u)$ in \eqref{det} is real with respect to the $*$-structures on $\aoq{3}$ defined in \eqref{*O3R}, respectively \eqref{*O3C}.
\end{lem}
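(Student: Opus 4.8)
```latex
The plan is to show that the quantum determinant $D_q(u)$ defined in \eqref{det} satisfies $D_q(u)^* = D_q(u)$ for each of the two $*$-structures on $\aoq{3}$. The cleanest route is not to apply $*$ term-by-term to the seven monomials in \eqref{det} (which would be error-prone), but rather to exploit the structural characterisation of $D_q(u)$ via the coaction on the top-degree component of the exterior algebra $\Lambda_q(V)$, combined with the previous Lemma stating that $\rho$ is a $*$-map. First I would recall that $D_q(u)$ is the unique element of $\aoq{3}$ with $\rho(w) = D_q(u) \ot w$ for $w = e_1 e_2 e_3$ the (generator of the one-dimensional) top-degree subspace.

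The key observation is that the $*$-structure on $\Lambda_q(V)$ acts on the top-degree element $w$ by a scalar: since $w^* = (e_1 e_2 e_3)^* = e_3^* e_2^* e_1^*$, and using the formulas \eqref{*extR} or \eqref{*extC}, $w^*$ is again a degree-three element of $\Lambda_q(V)$, hence proportional to $w$ by one-dimensionality, say $w^* = \mu\, w$ for some scalar $\mu \in \IC$. For the real case \eqref{*extR}, applying $e_k^* = q^{\rho_k} e_{k'}$ gives $w^* = q^{\rho_3+\rho_2+\rho_1} e_{3'} e_{2'} e_{1'} = q^0\, e_1 e_2 e_3 = w$ (using $\rho_1+\rho_2+\rho_3 = 0$ and $1'=3$, $2'=2$, $3'=1$, up to reordering via \eqref{comm-e} which contributes a further scalar); for the case $|q|=1$ with \eqref{*extC}, $w^* = e_3 e_2 e_1$, which by \eqref{tensor-epsilon} equals $\varepsilon_{321}\, w = -q^2 w$. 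I would then compute $\mu$ explicitly in each case. The point is that $\mu$ is a phase (a root of unity times a power of $q$) of modulus one under the relevant constraint on $q$.

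Now I would apply $*$ to the defining identity $\rho(w) = D_q(u) \ot w$. Since $\rho$ is a $*$-algebra map by the previous Lemma, $(\rho(w))^* = \rho(w^*)$. Computing the left side, $(\rho(w))^* = (D_q(u) \ot w)^* = D_q(u)^* \ot w^* = D_q(u)^* \ot \mu w = \mu\, D_q(u)^* \ot w$. Computing the right side, $\rho(w^*) = \rho(\mu w) = \mu\, \rho(w) = \mu\, D_q(u) \ot w$. Equating and cancelling the nonzero scalar $\mu$ and the basis element $w$ yields $D_q(u)^* \ot w = D_q(u) \ot w$, whence $D_q(u)^* = D_q(u)$ by uniqueness of the coefficient. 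This handles both real forms uniformly, the only case-dependence being the value of $\mu$, which drops out anyway.

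The main obstacle I expect is the careful verification that the scalar $\mu$ is well-defined and nonzero, i.e. keeping track of the reordering signs from the relations \eqref{comm-e} when rewriting $w^*$ in the standard form $e_1 e_2 e_3$. In the $|q|=1$ case this is immediate from \eqref{tensor-epsilon}, but in the $q \in \IR$ case the factor $q^{\rho_1+\rho_2+\rho_3}$ must be combined with the permutation coefficient from $\varepsilon$, and one must confirm these conspire to give $|\mu|=1$ (this uses $q$ real, so $q^* = q$, compatibly with $\rho$ being a $*$-map). A minor technical check is that the $*$-operation on $\aoq{3} \ot \Lambda_q(V)$ is the tensor-product involution $(a \ot \xi)^* = a^* \ot \xi^*$, which is exactly what the previous Lemma's statement that $\rho$ is a $*$-map presupposes; once this is granted the argument is a two-line scalar cancellation. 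As an alternative (or cross-check) one could verify the claim directly by applying $*$ to \eqref{det} and using the relations \eqref{*O3R} or \eqref{*O3C} together with the commutation relations \eqref{rtt-exp}, but this is precisely the lengthy computation the structural argument is designed to avoid.
```
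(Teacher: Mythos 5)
Your proposal is correct and follows essentially the same route as the paper: the paper's proof applies $*$ to the identity $\rho(\xi)=D_q(u)\ot\xi$ for a three-form $\xi$, uses that $\rho$ is a $*$-map and that $\xi^*$ is again a three-form, and compares coefficients to conclude $D_q(u)^*=D_q(u)$. Your extra step of computing the scalar $\mu$ with $w^*=\mu w$ is harmless but unnecessary (it cancels, as you note), and the term-by-term verification you mention as a cross-check is exactly the alternative the paper records parenthetically.
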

\begin{proof}
For each three-form $\xi \in \Lambda_q(V)$, from 
$\rho(\xi) = D_q(u) \ot \xi$, it follows that $D_q(u)^* \ot \xi^* = \rho(\xi)^* = \rho(\xi^*) = D_q(u) \ot \xi^*$ and therefore that the quantum determinant is real: $D_q(u)^*=D_q(u)$. 
(Alternatively, the Lemma can be proved by comparing $D_q(u)^*$ computed from \eqref{det} 
with the formula for $D_q(u)$ given by  the expansion of the quantum determinant with respect to the third row.)
\end{proof}
It follows that   
$\langle D_q(u)-1 \rangle$ is a $*$-ideal. 
For $q \in \IR$, we denote by  $\cO(SO_q(3, \IR))$ the quotient Hopf $*$-algebra $\cO(O_q(3, \IR))/\langle D_q(u)-1 \rangle$ with $*$-structure inherited from that of $\cO(O_q(3, \IR))$ in \eqref{*O3R}. While we denote by $\cO(SO_q(1,2))$
the quotient Hopf $*$-algebra $\cO(O_q(1,2))/\langle D_q(u)-1 \rangle$  with $*$-structure inherited from that of $\cO(O_q(1,2))$ in \eqref{*O3C}.

\subsection{The double covering of $SO_{q}(3)$}\label{sec:dc}
Classically, the Lie group $SL(2)$ is a double covering of $SO(3)$. The quantum analogue of this fact was proven in \cite{dij} where it was shown the existence of a 
 Hopf algebra isomorphism between the coordinate algebra $\mathcal{O}(SO_{q}(3))$ and the subalgebra of $\mathcal{O}(SL_{\s}(2))$, $\s=q^\half$, made of coinvariant elements for the coaction of the group algebra $\IC \IZ_2$ of $\IZ_2$.  If we denote by $a,b,c,d$ the generators of $\mathcal{O}(SL_{\s}(2))$,  the defining matrix and commutation relations are given by 
\beq\label{SLs2}
v:=\begin{pmatrix}
a & b \\c & d
\end{pmatrix}  \quad 
\begin{array}{lll}
ab=\s~ba \, ,\quad &ac= \s~ca \, ,\quad &bd= \s ~db \, ,
\\
cd= \s ~dc \, ,\quad & bc= cb\, ,\quad  & ad= da +(\s-\s^{-1}) bc
\end{array}
\eeq
 with moreover $ad- \s \, bc=1$. In matrix notation, $\mathcal{O}(SL_{\s}(2))$
has coproduct $\Delta (v)=v \ot v$, counit $\varepsilon(v)=\II$ and antipode $S(v)= \begin{pmatrix}
d & -\s^{-1}b \\- \s ~c & a
\end{pmatrix}$.
Denoting  by $u$ the defining matrix of  $\mathcal{O}(SO_{q}(3))$, as before, the Hopf algebra isomorphism is given by
\beq\label{dc}
u \mapsto
\begin{pmatrix}
a^2 & (1+\s^{2})^\half ba & -b^2
\\
(1+\s^{2})^\half ca & 1+ (\s +\s^{-1})bc & -(1+\s^{2})^\half db
\\
-c^2 & - (1+\s^{2})^\half dc & d^2
\end{pmatrix}.
\eeq

\section{The orthogonal $2$-sphere and hyperboloid}\label{sec:qvs}

As mentioned in \S \ref{sec:q-det} for the general case, associated with the quantum group $\soq{3}$ there is  a quantum vector space $\qvs$. It is defined, via the the $q$-symmetrizer matrix $P_-$ in the decomposition  \eqref{decr} of the $R$-matrix, as the free algebra generated by three elements $\x{k}$, $k=1,2,3,$ modulo an ideal of relations:
$$
\aqvs := \IC \langle \x{k}\rangle /{\langle (P_-)^{jl}_{mn} \x{m}\x{n} \rangle} \; .
$$
Explicitly, with the $R$-matrix  in \eqref{Rmat3}, the algebra relations are given by
\beq\label{rel_C3}
\x{2}\x{1}= q^{-1}  \x{1}\x{2} \; , \quad
\x{3}\x{2}  = q^{-1}  \x{2}\x{3} \; , \quad
\x{3}\x{1}  =  \x{1}\x{3}+ (q^\half - q^{-\half})  \x{2}^2 \; .
\eeq
By construction $\aqvs$ carries a left coaction which is an algebra homomorphism and is given by
$$
\rho: \aqvs \to \asoq{3} \ot \aqvs \; , \quad \x{k} \mapsto \u{k}{m} \ot \x{m} \; .
$$ 
It is easy to see that the quadratic element
\beq
r:= q^{-\half}  \x{1}\x{3}+\x{2}^2 + q^\half \x{3}\x{1} 
\eeq
belongs to the centre of the algebra and the coaction $\rho$ descends to a coaction on the quotient algebra 
$\aqvs/ \langle r-1 \rangle$. 

There are again two $*$-structures as the ones in \eqref{*y} making $\aqvs$ a $*$-algebra. 
For $q \in \IR$, the involution is $\x{k}^*=q^{\rho_k} \x{k'}$, or explicitly,
\beq
\x{1}^*=q^{\half} \x{3} \; , \quad 
\x{2}^*= \x{2} \; , \quad 
\x{3}^*=q^{-\half} \x{1} \; , 
\eeq
while for $|q|=1$ the algebra $\aqvs$ is a $*$-algebra for $\x{k}^*=\x{k}$.

For both choices of $q$ the central element $r$ is real, $r^*=r$; thus the
quotient algebras $\aqvs/ \langle r-1 \rangle$ are left comodules $*$-algebra 
for the corresponding Hopf $*$-algebras obtained from $\asoq{3}$, that is 
$\cO(SO_q(3,\IR))$ and $\cO(SO_q(1,2))$. 

In order to understand the geometry of the quantum spaces described by the $*$-algebras $\aqvs/ \langle r-1 \rangle$ one introduces cartesian coordinates. 
Consider the following generators:
\beq
\X{1}:= \mu \, \ii \, \frac{1}{\sqrt{2}} \left(- \alpha \x{1} +\beta \x{3} \right)
\, ,\quad 
\X{2}:=\gamma \, \x{2}
\, ,\quad 
\X{3}:=\frac{1}{\sqrt{2}} \left(\alpha \x{1} + \beta \x{3} \right)
\eeq
with $\alpha,\beta,\gamma, \mu \in \IC$ such that
$$
\alpha \beta = \frac{1}{2} (q^\half + q^{-\half}) \,  \, , \qquad 
\gamma^2 = \half (q + q^{-1}) \, , \qquad 
\mu = \left\{ \begin{array}{l} 1 \; \mbox{ if } q \in \IR
\\
\\
- \ii  \; \mbox{ if } |q|=1
\end{array} \right. \, .
$$
Provided we choose $\beta= q^\half \bar{\alpha}$ for $q \in \IR$ and $\alpha=\bar{\alpha},\, \beta=\bar{\beta}$ for $|q|=1$,  
the generators $\X{k}$ are real, $\X{k}^*=\X{k}$, for both $*$-structures.
The quadratic identity
$$
q^{-\half}  \x{1}\x{3}+\x{2}^2 + q^\half \x{3}\x{1} =1 ~ ,
$$
 in terms of the real generators $\X{k}$, is easily found to become
\beq
\mu^2 \X{1}^2 + \X{2}^2 +\X{3}^2 =1 \, .
\eeq
This is the equation of a two-sphere if $\mu^2 =1$, or a hyperboloid if $\mu^2 = -1$.

For $q \in \IR$, we denote by $\asphf$ the $*$-algebra $\aqvs/ \langle r-1 \rangle$, the 
coordinate algebra of the quantum Euclidean real unit sphere $\sphf$.
It is a  left comodule $*$-algebra for $\cO(SO_q(3;\IR))$. The sphere $\sphf$ is in fact the 
equatorial  Podle\'s sphere of \cite{po87}. 

For $|q|=1$  we denote by $\ahypf$ the $*$-algebra  $\aqvs/ \langle r-1 \rangle$, the coordinate algebra of the quantum Euclidean hyperboloid $\hypf$, a left comodule $*$-algebra for  $\cO(SO_q(1,2))$.

\subsection{Pre-regular multilinear forms} \label{sec:pmf}

Let $w$ be the linear form on $\IC^3$ with components  
\beq\label{w}
w(v_i,v_j,v_k)=: 
\varepsilon_{ijk} 
\eeq
in the canonical basis $\{v_j, ~j=0,1,2,3\}$ of $\IC^3$,
where $\varepsilon_{ijk} $ is the tensor in \eqref{tensor-epsilon}.

With reference to the theory of pre-regular multilinear forms (see \cite[Def.~2]{dv07}) we have the following result
\begin{lem}
The linear form $w$ is pre-regular, that is   
\begin{enumerate}[(i)]
\item 
there exists an element $T \in GL(3, \IC)$ such that $w$ is $T$-cyclic, i.e. such that
$w(V_1,V_2,V_3)=w(T(V_3),V_1,V_2)$ for all $V_1,V_2,V_3 \in \IC^3$;
\item[~]
\item \quad
if $v \in \IC^3$ is such that  
$w(v,e_j,e_k) = 0$ for all indices $j,k$, then $v=0$. 
\end{enumerate}
\end{lem}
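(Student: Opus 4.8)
The plan is to read both conditions off directly from the explicit components \eqref{tensor-epsilon}, taking advantage of the sparse support of the tensor $\varepsilon$.

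For the $T$-cyclicity in (i), I would first translate the condition to the canonical basis. Writing $T v_k = \sum_l T_{lk} v_l$ and evaluating $w(v_i,v_j,v_k)=w(Tv_k,v_i,v_j)$ reduces the requirement to the linear system
$$
\varepsilon_{ijk} = \sum_{l} T_{lk}\, \varepsilon_{lij}, \qquad \text{for all } i,j,k .
$$
I would solve this by grouping $\varepsilon$ into the three ``first slot'' slices $B^{(l)}_{ij}:=\varepsilon_{lij}$ and the three ``last slot'' slices $A^{(k)}_{ij}:=\varepsilon_{ijk}$, so that the system becomes $A^{(k)}=\sum_l T_{lk} B^{(l)}$ for each fixed $k$. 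The slices are very sparse — $B^{(1)}$ is supported on the positions $(2,3),(3,2)$, $B^{(2)}$ on $(1,3),(3,1),(2,2)$, and $B^{(3)}$ on $(1,2),(2,1)$ — and matching nonzero entries forces the diagonal matrix $T=\mathrm{diag}(q,1,q^{-1})=\mathrm{diag}(q^{2\rho_1},q^{2\rho_2},q^{2\rho_3})$, which is invertible with $\det T=1$.

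The point to check carefully is that this single $T$ solves the whole overdetermined system at once: each slice contributes two or three equations, and they must be mutually consistent. For example, for $k=1$ the $(2,3)$-entry gives $T_{11}=q$ while the $(3,2)$-entry gives $-q^2=T_{11}(-q)$, and these agree; the analogous checks for $k=2,3$ (together with the vanishing of the off-slice coefficients) pin down the remaining entries of $T$. This consistency is the only mildly delicate point, and it becomes transparent once the slice decomposition is in place.

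For (ii), I would note that $w(v,e_j,e_k)=\sum_i v_i \varepsilon_{ijk}=\sum_i v_i B^{(i)}_{jk}$, so the hypothesis is exactly the matrix identity $\sum_i v_i B^{(i)}=0$. Because the three slices $B^{(1)},B^{(2)},B^{(3)}$ have pairwise disjoint support (the listed positions are distinct) and each is nonzero for $q\neq 0$, they are linearly independent and $v=0$ follows. Most directly, the three evaluations $w(v,e_2,e_3)=v_1$, $w(v,e_1,e_3)=-q\,v_2$ and $w(v,e_1,e_2)=q\,v_3$ must all vanish, giving $v_1=v_2=v_3=0$. A useful sanity check throughout is that the matrix $T=\mathrm{diag}(q^{2\rho_i})$ obtained in (i) is precisely the diagonal modular-type matrix one expects to implement cyclicity for the $q$-deformed volume form.
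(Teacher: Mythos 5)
Your proof is correct and follows essentially the same route as the paper: you arrive at the same diagonal matrix $T=\mathrm{diag}(q,1,q^{-1})$ and your slice-by-slice matching is just a reorganization of the paper's componentwise check $\varepsilon_{ijk}=\mu_k\varepsilon_{kij}$. You are in fact slightly more complete, since you also verify that the off-diagonal entries of $T$ must vanish and you spell out part (ii) explicitly, which the paper's proof leaves implicit.
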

\begin{proof}
Define $T \in GL(3,\IC)$ as the linear transformation $T(v_j)=\mu_j v_j$ for 
$\mu_1=q$, $\mu_2=1$, $\mu_3=q^{-1}$. By direct computation one verifies that $w$ is such that
$w(v_i,v_j,v_k)=w(T(v_i),v_j,v_k)$ on the elements $v_i$ of the basis, for $i,j,k=1,2,3$, being
$\varepsilon_{ijk}  =  \mu_k \varepsilon_{kij} $. 
\end{proof}

\begin{lem}
Let $A(w,2)$ be the quadratic algebra generated by elements $x_i$, $i=1,2,3$,
satisfying the three relations
\beq\label{comm-w}
\sum_{jk} \varepsilon_{ijk} \, x_j x_k =0 , \quad  \mbox{ for } i=1,2,3 \,\, .
\eeq
Then $A(w,2)$ coincides with the algebra $
\aqvs := \IC \langle \x{k}\rangle /{\langle (P_-)^{jl}_{mn} \x{m}\x{n} \rangle}$. 
\end{lem}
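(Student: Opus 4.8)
The plan is to verify that the two ideals of quadratic relations coincide. Both $A(w,2)$ and $\aqvs$ are quotients of the free algebra $\IC\langle \x{1}, \x{2}, \x{3}\rangle$ by a two-sided ideal generated by three quadratic elements --- the relations \eqref{comm-w} on one side and the relations \eqref{rel_C3} (already obtained from the $q$-symmetrizer $P_-$) on the other --- so it suffices to substitute the explicit components of $\varepsilon$ from \eqref{tensor-epsilon} into \eqref{comm-w} and read off the resulting three relations.

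First I would treat the cases $i=1$ and $i=3$, which are the simplest. For $i=1$ only $\varepsilon_{123}=1$ and $\varepsilon_{132}=-q$ contribute, so \eqref{comm-w} reads $\x{2}\x{3} - q\, \x{3}\x{2} = 0$, equivalently $\x{3}\x{2} = q^{-1}\x{2}\x{3}$. For $i=3$ only $\varepsilon_{312}=q$ and $\varepsilon_{321}=-q^2$ contribute, giving $q\,\x{1}\x{2} - q^2\,\x{2}\x{1} = 0$, equivalently $\x{2}\x{1} = q^{-1}\x{1}\x{2}$. These are precisely the first two relations in \eqref{rel_C3}.

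Next I would treat $i=2$, the only case involving the diagonal component $\varepsilon_{222}$. Here $\varepsilon_{213}=-q$, $\varepsilon_{231}=q$ and $\varepsilon_{222}=-q(q^\half - q^{-\half})$ contribute, so \eqref{comm-w} becomes $-q\,\x{1}\x{3} + q\,\x{3}\x{1} - q(q^\half - q^{-\half})\,\x{2}^2 = 0$; dividing by the nonzero scalar $q$ yields $\x{3}\x{1} = \x{1}\x{3} + (q^\half - q^{-\half})\,\x{2}^2$, which is the third relation in \eqref{rel_C3}. Since the three relations \eqref{comm-w} and the three relations \eqref{rel_C3} therefore generate the same ideal, the two algebras coincide.

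There is no genuine obstacle here: the argument is a finite direct computation. The only point requiring care is the bookkeeping in $\sum_{jk}\varepsilon_{ijk}\x{j}\x{k}$, since $\varepsilon$ is not antisymmetric in the naive sense (for instance $\varepsilon_{213}=-q \neq -\varepsilon_{123}$) and the generators $\x{j}$ do not commute, so the order of $\x{j}\x{k}$ must be preserved and each monomial tracked term by term rather than collapsed by symmetry. It is worth noting for completeness that the three relations \eqref{comm-w} are manifestly independent --- they involve the disjoint monomial sets $\{\x{2}\x{3},\x{3}\x{2}\}$, $\{\x{1}\x{3},\x{3}\x{1},\x{2}^2\}$ and $\{\x{1}\x{2},\x{2}\x{1}\}$ --- so their number matches the rank of the $q$-symmetrizer $P_-$ in \eqref{decr}, leaving no room for additional hidden relations on either side.
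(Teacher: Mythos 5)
Your proof is correct and is exactly the paper's approach: the paper's proof consists of the single line ``by direct check, comparing \eqref{comm-w} with relations \eqref{rel_C3}'', and you have simply carried out that check explicitly, correctly matching the $i=1,3$ cases to the first two relations of \eqref{rel_C3} and the $i=2$ case (with the $\varepsilon_{222}$ term) to the third.
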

\begin{proof}
By direct check, comparing \eqref{comm-w} with relations \eqref{rel_C3}.
\end{proof}

\section{The quantum homogeneous spaces}\label{sec:qhs}

We  already mentioned that part of the definition of quantum orthogonal group requires $N>2$ (and $q^2\not=\pm 1$). There is indeed no quantum group $\oq{2}$ and one rather has that $\oq{2}=O(2)$, the usual orthogonal group in 2-dimensions. Let us better clarify this fact. 
For $N=2$, the defining $R$-matrix of $\aoq{2}$, that we denote by $\widetilde{R}$ to distinguish it from that of $\aoq{3}$,  has a simpler expression. 
Indeed  
formula \eqref{Rmatrix}
$$
\widetilde{R}^{ij}_{mn}=q^{\delta_{ij}-\delta_{ij'}} \delta_{im}\delta_{jn} + (q-q^{-1}) \theta(i-m)
(\delta_{jm}\delta_{in}-q^{-\rho_i-\rho_n}\delta_{ij'}\delta_{nm'})
$$
simplifies for $N=2$. The Heaviside function $\theta$ is non zero only for $i=2, m=1$, but in this case the last summand in $\widetilde{R}$ is zero (being $i'=3-i$ and $\rho_i=0$ for each index i=1,2). Thus $\widetilde{R}$ is 
 diagonal 
and given in matrix form by
\beq\label{Rmat2}
\widetilde{R}= \left(
\begin{array}{cc|cc}
q &  &  &
\\
0 & q^{-1} &  & 
\\
\hline
0 & 0 &q^{-1} & 
\\
0 & 0 &0 & q
\end{array}\right)  .
\eeq

We define $\aoq{2}$ to be the quotient of free algebra $\IC \langle \v{i}{j} \rangle$ generated over $\IC$ by the entries of a matrix $\tilde{u}=(\v{i}{j})$, $i,j=1,2$, modulo the relation \eqref{rtt-matrix}, 
\beq\label{rtt-v}
\widetilde{R}^{ji}_{kl} \v{k}{m} \v{l}{n}=\v{i}{k} \v{j}{l} \widetilde{R}^{lk}_{mn} \quad i,j,k,l=1,2
\eeq
and \eqref{metric} 
\beq\label{metric-v}
\tilde{u} \widetilde{C}\tilde{u}^t \widetilde{C}^{-1} =   \widetilde{C}\tilde{u}^t \widetilde{C}^{-1} \tilde{u}=\II \, , \quad \widetilde{C} = \widetilde{C}^{-1}= \begin{pmatrix}
0 & 1 \\ 1 & 0 \end{pmatrix}.
\eeq
From the matrix \eqref{Rmat2}, relations \eqref{rtt-v} simply read 
$$
 \v{j}{m} \v{i}{n} = q^{\delta_{ij'}-\delta_{ij}+\delta_{mn}-\delta_{mn'}} \v{i}{n} \v{j}{m} 
$$
from which we see that $\aoq{2}$ is a commutative algebra with
$$
 \v{1}{1} \v{2}{2} = \v{2}{2} \v{1}{1}  \, ,\quad \v{1}{2} \v{2}{1} = \v{2}{1} \v{1}{2}
$$
together with  
\footnote{From RTT equations \eqref{rtt-exp} we indeed obtain  identities of the form $\v{1}{1} \v{1}{2} = q^{-2} \v{1}{2} \v{1}{1}$ and 
$\v{1}{1} \v{1}{2} = q^{2} \v{1}{2} \v{1}{1}$, hence concluding by comparison that $\v{1}{1} \v{1}{2} = 0= \v{1}{2} \v{1}{1}$.}
\begin{align}\label{rc-o2}
\v{1}{1} \v{1}{2} = 0= \v{1}{2} \v{1}{1}   \; &; & \quad \v{1}{1} \v{2}{1} = 0 =\v{2}{1} \v{1}{1}
  \\ \nn \quad \v{1}{2} \v{2}{2} = 0= \v{2}{2} \v{1}{2}
  \; &; &   \v{2}{1} \v{2}{2} =  0= \v{2}{2} \v{2}{1} \, .
\end{align}
On the other hand, the metric condition \eqref{metric-v} requires 
\begin{align*}
\v{1}{1}  \v{1}{2} + \v{1}{2}  \v{1}{1}=0 \; ,
\quad \v{2}{1}  \v{2}{2} + \v{2}{2}  \v{2}{1}=0 \; ,
\\
\v{2}{2}  \v{1}{2} + \v{1}{2}  \v{2}{2}=0 \; ,
\quad
\v{2}{1}  \v{1}{1} + \v{1}{1}  \v{2}{1}=0
\end{align*}
as well as
$\v{1}{1}  \v{2}{2} + \v{1}{2}  \v{2}{1}=1$. Thus, excluding zero divisors, either $\v{1}{1} =\v{2}{2} = 0$ or $\v{1}{2}= \v{2}{1} = 0$ in order for \eqref{rc-o2} and $\v{1}{1}  \v{2}{2} + \v{1}{2}  \v{2}{1}=1$
to be both satisfied. The condition $\varepsilon(\v{1}{1}  \v{2}{2} + \v{1}{2}  \v{2}{1})=1$ forces the choice 
$\v{1}{2}= \v{2}{1}=0$.

Obviously, the element $D_q(\tilde{u})= \v{1}{1}  \v{2}{2} $ is central and the quotient algebra 
$\asoq{2}=\aoq{2}/\langle D_q(\tilde{u})-1\rangle$ is 
 just the commutative coordinate algebra of the classical group $SO(2)$ generated by the entries of 
$\tilde{u}= \begin{pmatrix}
\v{1}{1} & 0 \\ 0 & \v{2}{2} 
\end{pmatrix}$ with 
$\v{1}{1}  \v{2}{2}= \v{2}{2}  \v{1}{1} =1$.

The Hopf algebra of $SO(2)$ admits two $*$-structures: 
\begin{enumerate}
\item[] $(\v{k}{k})^*=\v{k}{k}$, for all $k=1,2$, giving the real group $SO(2; \IR)$ \\
\item[] $(\v{1}{1})^*=\v{2}{2}$, giving the real group $ SO(1,1)$ .
\end{enumerate}

\subsection{The quantum principal $SO(2)$-bundle}\label{sec:fibr-princ}
It is known that $SO(2)$ 
is a quantum subgroup of $\soq{3}$ (see e.g. \cite[Thm.~3.5]{po95}.  
Indeed, it is easily shown that $I:=\langle \u{i}{j} | i \neq j \rangle$ is a Hopf ideal in $\asoq{3}$.    
The quotient Hopf algebra $\asoq{3}/I$ is generated by the elements $\v{i}{j}:= \pi(\u{i}{j})$, for $\pi$ the quotient map $\pi: \asoq{3} \to \asoq{3}/I$, 
and thus has just three generators $\v{i}{i}$, $i=1,2,3$.
Their commutation relations are obtained via the projection $\pi$ from those of $\asoq{3}$.
From the equation \eqref{rtt-exp} we simply get
$$
\v{j}{j} \v{k}{k}= \v{k}{k}\v{j}{j} \; , \; \forall j,k=1,2,3. 
$$ 
In addition, the metric condition \eqref{metric} requires that $\v{1}{1} \v{3}{3}=1$ and (by using also the counit $\varepsilon$) that $\v{2}{2} = 1$.
Thus the Hopf algebra $\asoq{3}/I$ is a copy of $\cO(SO(2))$, that realises $SO(2)$ as a quantum subgroup of $\soq{3}$.  

The construction is compatible with both $*$-structures of $\asoq{3}$ for the two cases, $q \in \IR$ or $|q|=1$. That is, the ideal $I$ is a $*$-ideal with respect to both of them and the quotient spaces are Hopf $*$-algebras. 
In particular,  $\asoq{3}/I$ is isomorphic to the $*$-algebra $\cO(SO(2; \IR))$ in the case  $q\in \IR$ and 
to $\cO(SO(1,1; \IR))$ in the case $|q|=1$.

By a general construction, there is then a natural (right) coaction  of $SO(2)$ on $\soq{3}$ given by restriction of the coproduct, written in matrix notation as
\begin{align}\label{coact-right}
\delta = (\id \ot \pi) \Delta: \asoq{3} & \to \asoq{3} \ot \cO(SO(2)) 
\nn \\
\begin{pmatrix}
\u{1}{1}  &    \u{1}{2}  & \u{1}{3} 
\\
 \u{2}{1}  & \u{2}{2}  & \u{2}{3} 
\\
\u{3}{1}  &  \u{3}{2}  & \u{3}{3}  
\end{pmatrix} &\mapsto 
\begin{pmatrix}
\u{1}{1}  &    \u{1}{2}  & \u{1}{3} 
\\
 \u{2}{1}  & \u{2}{2}  & \u{2}{3} 
\\
\u{3}{1}  &  \u{3}{2}  & \u{3}{3}  
\end{pmatrix} \ot \begin{pmatrix} 
z  & 0  & 0
\\
0 & 1& 0
\\
0& 0  & z^{-1}  
\end{pmatrix} ,
\end{align}
where we set $z:=\v{1}{1}$.

Since $\delta(\u{i}{j})= \sum_k \u{i}{k} \ot \pi(\u{k}{j})= \u{i}{j} \ot \pi(\u{j}{j})$, it is clear that the  elements $b \in \asoq{3}$ which are coinvariant for the coaction,  $\delta(b)= b \ot 1$, are given in degree one by the span of the elements in the second column of the defining matrix $u$ of $\asoq{3}$ and, in addition, in degree two by the span of  products of any element of the first column with any one of the third, $\u{i}{1}\u{j}{3}$ or
$\u{i}{3}\u{j}{1}$ for  indices $i,j=1,2,3$. Nevertheless, we next show that all the elements $\u{i}{1}\u{j}{3}$ and $\u{i}{3}\u{j}{1}$ indeed belong to the span of those of the second column. 
\begin{prop}\label{prop:B}
The subalgebra 
$$
B:=\asoq{3}^{\text{co}\, \cO(SO(2))} = \{b \in \asoq{3} ~|~ \delta(b)= b \ot 1 \}
$$ 
of coinvariant elements of $\asoq{3}$ with respect to the coaction $\delta$ of $\cO(SO(2))$ in \eqref{coact-right}, is generated by the three elements $\u{i}{2}$, for $i=1,2,3$.
\end{prop}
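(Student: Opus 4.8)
The plan is to exploit the $\IZ$-grading induced by the coaction \eqref{coact-right}. Since $\cO(SO(2)) = \IC[z,z^{-1}]$ with $z = \tilde u_{11}$, the coaction $\delta$ makes $\asoq{3}$ a $\IZ$-graded algebra $\asoq{3} = \bigoplus_{n \in \IZ} A_n$, where $A_n := \{ a \in \asoq{3} \mid \delta(a) = a \ot z^n \}$ and $B = A_0$ is precisely the coinvariant subalgebra. By \eqref{coact-right} the generators are homogeneous, with $\deg u_{i1} = +1$, $\deg u_{i2} = 0$ and $\deg u_{i3} = -1$ for every $i$. In particular the whole second column lies in $A_0$, so the subalgebra $S := \langle u_{12}, u_{22}, u_{32} \rangle$ it generates satisfies $S \subseteq B$; the content of the statement is the reverse inclusion $B \subseteq S$.

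Every element of $A_0$, being homogeneous of degree $0$, is a linear combination of degree-$0$ monomials in the $u_{ij}$, and each such monomial contains equally many column-$1$ and column-$3$ factors, say $m$ of each. I would prove $B \subseteq S$ by induction on $m$. For $m = 0$ the monomial involves only column-$2$ generators and thus already lies in $S$, which settles the base case.

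The heart of the argument is the case $m = 1$, namely the claim that $u_{i1} u_{j3} \in S$ and $u_{i3} u_{j1} \in S$ for all $i,j$. To establish it I would assemble three families of degree-$0$ linear relations among these products. First, the cofactor identities \eqref{2ccof} express $u_{12}, u_{22}, u_{32}$ as linear combinations of the products $u_{i1} u_{j3}$ and $u_{i3} u_{j1}$. Second, the orthogonality relations $u\, S(u) = \II = S(u)\, u$ from Proposition \ref{pro:cof} and \eqref{S=cofactor}, read entrywise, are all $\IZ$-homogeneous of degree $0$ and take the form $(\text{a column-}1 \times \text{column-}3 \text{ product}) + (\text{a column-}3 \times \text{column-}1 \text{ product}) = \delta_{ij} - (\text{a product of two column-}2 \text{ generators})$, whose right-hand side already belongs to $S$. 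Third, the commutation relations \eqref{rtt-exp} relate $u_{i1} u_{j3}$ to $u_{j3} u_{i1}$ and $u_{i3} u_{j1}$ to $u_{j1} u_{i3}$. Solving the resulting linear system expresses each of the eighteen products individually as an element of $S$; this is the main computational step.

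With this key lemma the inductive step proceeds as follows. Given a degree-$0$ monomial with $m \ge 1$, I would use the commutation relations \eqref{rtt-exp} to bring one column-$1$ factor next to one column-$3$ factor and then substitute the value of that adjacent pair in $S$. Crucially, moving a column-$3$ generator leftwards through the column-$2$ block, or a column-$1$ generator rightwards through it, creates no new column-$1$ or column-$3$ factors: the relations \eqref{rtt-exp} are $\IZ$-homogeneous, so a column-$1$ factor (degree $+1$) could only be produced together with an impossible degree-$(-2)$ partner, and symmetrically for column-$3$. Hence the parameter $m$ is unchanged by the reordering, while the substitution lowers it to $m-1$, and the induction closes. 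The main obstacle is thus confined to the $m = 1$ case: one must check that the relations of the three families above are sufficiently independent to isolate each product, keeping careful track of the powers of $q^{1/2}$ coming from the asymmetric $R$-matrix \eqref{Rmat3} and of the degenerate factor $q^{1/2} - q^{-1/2}$ appearing in \eqref{2ccof}. Once that explicit degree-$2$ computation is done, the rest is the bookkeeping of the grading.
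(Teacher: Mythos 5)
Your proposal is correct and follows essentially the same route as the paper: the core of both arguments is the degree-two computation that expresses each product $\u{i}{1}\u{j}{3}$ and $\u{i}{3}\u{j}{1}$ through the cofactor identities \eqref{2ccof}, the orthogonality relations $uS(u)=\II=S(u)u$, and the commutation relations \eqref{rtt-exp}. The only difference is that you spell out the $\IZ$-grading and the induction on the number of column-$1$/column-$3$ pairs, a reduction the paper treats as immediate before stating the proposition.
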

\begin{proof}
We show that the elements $\u{i}{3}\u{j}{1}$ and $\u{i}{1}\u{j}{3}$ can be written as polynomials in the elements of the second column. By taking $m=3,~ n=1$ in \eqref{rtt-exp}, we obtain
\beq\label{cr13}
 q^{-1}  \u{i}{1} \u{j}{3} 
=
q^{\delta_{ij}-\delta_{ij'}} \u{j}{3}  \u{i}{1}
+ \lambda \theta(j-i) \u{i}{3}  \u{j}{1} -  \lambda \delta_{ij'} \sum_k \theta(j-k)  q^{-\rho_i-\rho_k}\u{k}{3}  \u{k'}{1}   
\eeq
so it is enough to establish the result for the elements $\u{i}{3}\u{j}{1}$.  (We list nevertheless the expressions of all coinvariant elements in terms of the elements $\u{k}{2}$ in Appendix \ref{app:B}.) 
In the proof we will use the identities
$$
\u{3}{1} \u{1}{3}  = \u{1}{3}  \u{3}{1} 
\, , \quad \u{1}{1} \u{1}{3}  = q^2 \u{1}{3}  \u{1}{1}
\, , \quad \u{3}{1} \u{3}{3}  = q^2 \u{3}{3}  \u{3}{1} ~
$$
obtained from equation \eqref{cr13}, for suitable choices of indices $i,j$,  and
 the identification in \eqref{2ccof} of the elements of the second column of the matrix $u$ as cofactors. We will also use the relations   
$$
u S(u)= 
\begin{pmatrix}
\u{1}{1}  &    \u{1}{2}  & \u{1}{3} 
\\
 \u{2}{1}  & \u{2}{2}  & \u{2}{3} 
\\
\u{3}{1}  &  \u{3}{2}  & \u{3}{3}  
\end{pmatrix}
\begin{pmatrix}
\u{3}{3}  & q^{-\half}  \u{2}{3}  & q^{-1}\u{1}{3} 
\\
q^{\half}  \u{3}{2}  & \u{2}{2}  & q^{-\half} \u{1}{2} 
\\
q~ \u{3}{1}  & q^{\half}  \u{2}{1}  & \u{1}{1}  
\end{pmatrix}
= \II
$$ 
and
$$
S(u)u= \begin{pmatrix}
\u{3}{3}  & q^{-\half}  \u{2}{3}  & q^{-1}\u{1}{3} 
\\
q^{\half}  \u{3}{2}  & \u{2}{2}  & q^{-\half} \u{1}{2} 
\\
q~ \u{3}{1}  & q^{\half}  \u{2}{1}  & \u{1}{1}  
\end{pmatrix} 
\begin{pmatrix}
\u{1}{1}  &    \u{1}{2}  & \u{1}{3} 
\\
 \u{2}{1}  & \u{2}{2}  & \u{2}{3} 
\\
\u{3}{1}  &  \u{3}{2}  & \u{3}{3}  
\end{pmatrix}= \II \, .
$$ 
First, by using $\u{3}{1} \u{1}{3}=  \u{1}{3}\u{3}{1} $ in
the equality
 $(u S(u))_{33}=( S(u)u)_{11}$,
 we get
$$
\u{2}{3} \u{2}{1} =\u{3}{2} \u{1}{2}
  \; .
$$
By comparing the expressions  $(u S(u))_{11}=1$ and $\u{2}{2}=\hu{2}{2}$, we compute 
$$
\u{1}{3}\u{3}{1}= (1+q)^{-1} (1-\u{2}{2}-q^{-\half} \u{1}{2}\u{3}{2}) \, .
$$
Similarly, from $(u S(u))_{12}=0$ and the expression $q^{-\half} \u{1}{2} =\hu{2}{3}$
we obtain 
$$
\u{1}{3}\u{2}{1}= q^{-\half}  (1+q)^{-1} (\u{1}{2}-\u{1}{2}\u{2}{2}) \, ,
$$
while from
$(u S(u))_{13}=0$ and the relation $\u{1}{1} \u{1}{3}  = q^2 \u{1}{3}  \u{1}{1}$ found before,
$$
\u{1}{3}\u{1}{1}= - q^{-\half}  (1+q)^{-1} \u{1}{2}\u{1}{2} \, .
$$
We proceed by comparing $(u S(u))_{21}=0$ and the expression $q^{\half} \u{3}{2} =\hu{2}{1}$
and obtain 
$$
\u{2}{3}\u{3}{1}= q^{-\half}  (1+q)^{-1} (\u{3}{2}-\u{2}{2}\u{3}{2}) \, 
$$
while
$(u S(u))_{23}=0$ and the expression $q^{-\half} \u{2}{1} =\hu{2}{3}$
gives 
$$
\u{2}{3}\u{1}{1}= - q^{\half}  (1+q)^{-1} (\u{1}{2}+q^{-1}\u{2}{2}\u{1}{2}) \, .
$$
Then, from $(u S(u))_{31}=0$ and the relation $\u{3}{1} \u{3}{3}  = q^2 \u{3}{3}  \u{3}{1}$
we promptly get
$$
\u{3}{3}\u{3}{1}= -q^{-\half}  (1+q)^{-1} \u{3}{2}\u{3}{2}\, .
$$
From
$(u S(u))_{32}=0$ and the expression $q^{\half} \u{3}{2} =\hu{2}{1}$
we have 
$$
\u{3}{3}\u{2}{1}= -  (1+q)^{-1} ( q^{\half} \u{3}{2}+ q^{-\half}\u{3}{2}\u{2}{2}) \, .
$$
Finally, from  $(u S(u))_{33}=1$ and the equality $\hu{2}{2}=\u{2}{2}$ we obtain the last required relation
\begin{equation*}
\u{3}{3}\u{1}{1} = (1+q)^{-1} (q+\u{2}{2} - q^{-\half}\u{3}{2}\u{1}{2}) \, .
\qedhere \end{equation*}
\end{proof}

The commutation relations among the generators $\u{k}{2}$ of the subalgebra $B$ of coinvariants are obtained from equations \eqref{rtt-exp}, for $m=n=2$,
\begin{align*}
q^{\delta_{ij}-\delta_{ij'}} \u{j}{2}  \u{i}{2}  = &~ (1- \lambda  \theta(j-i) ) \u{i}{2} \u{j}{2} 
+ 
  \lambda \delta_{ij'} \sum_k \theta(j-k)  q^{-\rho_i-\rho_k}\u{k}{2}  \u{k'}{2}   
- \lambda   q^{-\half}\u{i}{1}  \u{j}{3} ~ \nn
\end{align*}
by substituting the explicit expression of the elements 
$\u{i}{1}  \u{j}{3}$ in terms of the elements $\u{k}{2}$ (as given in Appendix \ref{app:B}).
They are given by
\begin{align}\label{comm-rel-u2}
&\u{3}{2} \u{2}{2} = q^{-1} \u{2}{2} \u{3}{2} + (1-q^{-1}) \u{3}{2}  \, , \quad
\u{2}{2} \u{1}{2} = q^{-1} \u{1}{2} \u{2}{2} + (1-q^{-1}) \u{1}{2} \, , \nn
\\
&\u{3}{2} \u{1}{2} = q^{-2} \u{1}{2} \u{3}{2} + q^{-\half}(1-q^{-1}) (1-\u{2}{2}) \, .
\end{align}
Moreover from condition $(S(u)u)_{22}=1$ we also obtain
\beq
q^{\half} \u{3}{2} \u{1}{2} + q^{-\half}\u{1}{2} \u{3}{2} +(\u{2}{2}-1)(\u{2}{2}+1)=0\, .
\eeq

We will analyse the geometry of  $B$ as a quantum $*$-algebra in \S \ref{sec:*qs} below. 
Before we do that, we study the bundle structure of the quantum homogeneous space $B$.

\begin{prop}
The algebra extension $B=\asoq{3}^{co\, \cO(SO(2))} \subset \asoq{3}$ is Hopf-Galois, that is 
the canonical map
$$
\chi: \asoq{3} \ot_{B} \asoq{3} \to \asoq{3} \ot \cO(SO(2))  , \quad a' \ot a \mapsto a' \delta(a) 
$$ 
is bijective. 
\end{prop}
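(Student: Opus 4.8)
The plan is to produce an explicit two-sided inverse of $\chi$, exploiting the fact that the coaction $\delta$ makes $\asoq{3}$ into a $\IZ$-graded algebra. Since $\cO(SO(2))$ is the group Hopf algebra of $\IZ$, with $z=\v{1}{1}$ group-like, the datum of $\delta$ is the same as a $\IZ$-grading $\asoq{3}=\bigoplus_{n\in\IZ}A_n$, where $A_n:=\{a\in\asoq{3}\mid \delta(a)=a\ot z^{n}\}$ and $A_0=B$. On generators one reads off $\deg\u{i}{1}=+1$, $\deg\u{i}{2}=0$, $\deg\u{i}{3}=-1$ (that is, $\deg\u{i}{j}=2\rho_j$), so that the second column sits in $A_0=B$, in accordance with Proposition \ref{prop:B}. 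In this language I claim that the two membership conditions $1\in A_{1}A_{-1}$ and $1\in A_{-1}A_{1}$ already force $\chi$ to be bijective, and the proof makes the inverse completely explicit.

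First I would derive these two conditions from the orthogonality relation $S(u)\,u=\II$, which holds in $\asoq{3}$ because there $D_q(u)=1$ and $\widehat u=S(u)$ by Proposition \ref{pro:cof}. A degree count gives $\deg S(\u{i}{k})=-2\rho_i$, so every entry of the third row of $S(u)$ has degree $+1$ and every entry of the first row has degree $-1$. Reading the diagonal entries of $S(u)u=\II$ then yields
\[
\sum_k S(\u{3}{k})\,\u{k}{3}=1\in A_{1}A_{-1},\qquad
\sum_k S(\u{1}{k})\,\u{k}{1}=1\in A_{-1}A_{1},
\]
since $\u{k}{3}\in A_{-1}$ and $\u{k}{1}\in A_{1}$. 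A short induction upgrades this to $1\in A_{-n}A_{n}$ and $1\in A_{n}A_{-n}$ for all $n\geq0$: starting from $1=\sum_j t_j w_j$ with $t_j\in A_{-1}$, $w_j\in A_{1}$, one substitutes $t_j=\sum_k (t_jt_k)w_k$ to get $1=\sum_{j,k}t_jt_k\,w_kw_j\in A_{-2}A_{2}$, and iterates.

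With a decomposition $1=\sum_i p_i^{(n)}q_i^{(n)}$, $p_i^{(n)}\in A_{-n}$, $q_i^{(n)}\in A_{n}$, fixed for each $n$, I would define
\[
\chi^{-1}\colon\asoq{3}\ot\cO(SO(2))\to\asoq{3}\ot_B\asoq{3},\qquad a\ot z^{n}\mapsto\sum_i a\,p_i^{(n)}\ot q_i^{(n)}.
\]
Then $\chi\circ\chi^{-1}=\id$ is immediate, as $\delta(q_i^{(n)})=q_i^{(n)}\ot z^{n}$ gives $\chi\big(\sum_i a\,p_i^{(n)}\ot q_i^{(n)}\big)=a\sum_i p_i^{(n)}q_i^{(n)}\ot z^{n}=a\ot z^{n}$; this alone proves surjectivity. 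For $\chi^{-1}\circ\chi=\id$ I test a homogeneous $a\ot b$ with $a\in A_m$, $b\in A_n$: here $\chi(a\ot b)=ab\ot z^{n}$, and since $b\,p_i^{(n)}\in A_nA_{-n}\subseteq A_0=B$ may be carried across the balanced tensor product,
\[
\chi^{-1}(ab\ot z^{n})=\sum_i ab\,p_i^{(n)}\ot q_i^{(n)}=\sum_i a\ot (b\,p_i^{(n)})q_i^{(n)}=a\ot b\sum_i p_i^{(n)}q_i^{(n)}=a\ot b.
\]

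The one delicate point is that a priori $\chi^{-1}$ could depend on the chosen factorizations of $1$; this dissolves once the computations above exhibit a single formula that is simultaneously a left and a right inverse of $\chi$, whence it is forced to be the unique inverse and $B\subset\asoq{3}$ is Hopf--Galois. Thus the entire weight of the argument rests on the orthogonality relation $S(u)u=\II$ of Proposition \ref{pro:cof}, the remaining steps being formal bookkeeping of the $\IZ$-grading; I expect the only place demanding genuine care to be the degree accounting that pins the relevant matrix entries of $S(u)$ into $A_{\pm1}$.
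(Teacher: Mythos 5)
Your proof is correct and follows essentially the same route as the paper: both arguments rest on the orthogonality identities $(S(u)u)_{11}=1$ and $(S(u)u)_{33}=1$, which exhibit $1$ as an element of $A_{-1}A_{1}$ and $A_{1}A_{-1}$ (the paper phrases this as $\asoq{3}$ being strongly graded, citing \cite{AKL16}), and both then write down the same explicit inverse $\chi^{-1}(a\ot z^{n})=\sum a\,\eta_{j_1}\cdots\eta_{j_n}\ot_B\xi_{j_n}\cdots\xi_{j_1}$ built from iterated factorizations of $1$ and verify the two composites by carrying degree-zero products across the balanced tensor product.
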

\begin{proof}
We prove the statement by showing that the total space algebra $\asoq{3}$ is strongly graded 
(see Thm. 4.3 and Prop. 4.6 of \cite{AKL16}). 
We assign degree $+1$ to the elements of the first column of the defining matrix $u$, degree $-1$ to the elements of the third column of the matrix $u$ and degree $0$ to the elements of the central column of the matrix $u$. Let us denote $\mathcal{E}_{\pm 1}$ the collection of all degree $\pm$ elements respectively in $\asoq{3}$. Clearly they are modules over $\mathcal{E}_0=B$; \emph{a posteriori} these are shown to be finitely generated and projective over $B$ (see \cite[Cor. I.3.3]{NvO82}).   
 
 In the notation of \cite{AKL16} we have two sequences of elements in $\mathcal{E}_{+1}$:
\beq\label{xb}
 \{\xi_j \}_{j=1}^{3} = (\u{1}{1}, \u{2}{1}, \u{3}{1})  , \qquad  
 \{\beta_j \}_{j=1}^{3} = (q \u{3}{1}, q^{\half} \u{2}{1}, \u{1}{1})     
\eeq
 and two sequences of elements in $\mathcal{E}_{-1}$:
\beq\label{ea}
  \{\eta_j \}_{j=1}^{3} = (\u{3}{3}, q^{-\half}  \u{2}{3}, q^{-1}\u{1}{3} )  , \qquad  
 \{\alpha_j \}_{j=1}^{3} = (\u{1}{3}, \u{2}{3}, \u{3}{3} ) \, .
 \eeq
These are such that 
\beq\label{11}
\sum_{j=1}^{3} \eta_j  \xi_j = (S(u)u)_{11}=\u{3}{3} \u{1}{1} + q^{- \half} \u{2}{3} \u{2}{1} + q^{-1} \u{1}{3} \u{3}{1} = 1
\eeq 
and 
\beq\label{22}
\sum_{j=1}^{3} \beta_j  \alpha_j = (S(u)u)_{33}=q \u{3}{1} \u{1}{3} + q^{ \half} \u{2}{1} \u{2}{3} +  \u{1}{1} \u{3}{3} = 1 \, .
\eeq
 The inverse $\chi^{-1} : \asoq{3} \ot \cO(SO(2)) \to \asoq{3} \ot_B \asoq{3}$ of the canonical map, 
 by the general theory of \cite{AKL16}, is then given by 
\beq\label{chiinv}
\chi^{-1} : a \ot z^n \mapsto 
\begin{cases}
\sum_{J \in \{1,2,3\}^n} \, a \, \eta_{j_1} \cdots \eta_{j_n} \ot_B \xi_{j_n} \cdots \xi_{j_1} \, ,
& \mbox{for} \,\,\, n \geq 0  \\ 
a \ot_B 1 \, & \mbox{for} \,\,\, n = 0  \\
\sum_{I \in \{1,2,3\}^{-n}} \, a \, \beta_{i_1} \cdots \beta_{i_{-n}} \ot_B \alpha_{i_{-n}} \cdots \alpha_{i_1} \, ,
& \mbox{for} \,\,\,n \leq 0
\end{cases} \,\, .
\eeq
For the convenience of the reader we recall here the proof. If $n\geq 0$, 
\begin{align*}
\chi \circ \chi^{-1}( 1\ot z^n) & = 
\chi( \sum_{J \in \{1,2,3\}^n} \, a \, \eta_{j_1} \cdots \eta_{j_n} \ot_B \xi_{j_n} \cdots \xi_{j_1} ) \\
& = \sum_{J \in \{1,2,3\}^n} \, a \, \eta_{j_1} \cdots \eta_{j_n} \, \xi_{j_n} \cdots \xi_{j_1} 
 \ot z^n= 1 \ot z^n,  
\end{align*}
using \eqref{11} on all indices from $j_n$ to $j_1$ one after the other. Conversely, if $a \in \asoq{3}$ is of degree $n$, one has $\delta(a) = a \ot z^n$ and thus 
\begin{align*}
\chi^{-1} \circ \chi(1 \ot_B a) & = \chi^{-1}(a \ot z^n) = 
\sum_{J \in \{1,2,3\}^n} \, a \, \eta_{j_1} \cdots \eta_{j_n} \ot_B \xi_{j_n} \cdots \xi_{j_1} \\
& =
\sum_{J \in \{1,2,3\}^n} \, 1 \ot_B a \, \eta_{j_1} \cdots \eta_{j_n} \xi_{j_n} \cdots \xi_{j_1} = 1 \ot_B a
\end{align*}
using the fact that $a \, \eta_{j_1} \cdots \eta_{j_n}\in B$, so that it can cross over the balanced tensor product, and again \eqref{11}. One proceeds similarly for $n\leq 0$.
\end{proof}

\subsection{Two $*$-quantum homogeneous spaces of $\asoq{3}$}\label{sec:*qs}

We rename $\y{k}:=\u{k}{2}$, $k=1,2,3$ the generators of the subalgebra $B$ 
of coinvariant elements of $\asoq{3}$. They have commutation relations  \eqref{comm-rel-u2}:
\begin{align}\label{comm-rel-y}
&\y{3} (\y{2}-1) = q^{-1} (\y{2}-1) \y{3}   \, , \quad
 \y{1} (\y{2}-1) = q (\y{2}-1)  \y{1} \, , \nn
\\
&
\y{3} \y{1} = q^{-2} \y{1} \y{3} + q^{-\frac{3}{2}}(1-q) (\y{2}-1)
\end{align}
and satisfy the quadratic condition
\beq \label{quadric-y}
 q^{-\half}\y{1} \y{3} +q^{\half} \y{3} \y{1} +\y{2}^2=1\, .
\eeq
This, with the last equation in \eqref{comm-rel-y}, can also be written as
\begin{align}\label{1331}
(q^\half+q^{-\half}) \y{1} \y{3} & = (1 - \y{2}) (1 + q \y{2})  \nn \\ 
(q^\half+q^{-\half}) \y{3} \y{1} & = (1 - \y{2}) (1 + q^{-1} \y{2}) .
\end{align}

It is easy to see that  the coaction map $\delta$ in \eqref{coact-right} is a $*$-map, that is
$\delta(\u{k}{m}^*)= \delta(\u{k}{m})^*$, for both $q \in \IR$ 
and $|q|=1$ and corresponding $*$-structures. 
Hence $B$ is a $*$-algebra as well with $*$-structures inherited by those of $\asoq{3}$ and given on the generators $\y{k}$ by 
\begin{align}\label{*y}
&\mbox{ for } q \in \IR: \qquad (\y{1})^*= q^{\half} \y{3} \; ; \quad (\y{2})^*= \y{2} \; ; \quad  (\y{3})^*= q^{-\half} \y{1} \; , \nn \\
&\mbox{ for } |q|=1: \qquad (\y{k})^*=\y{k} \,, \quad k=1,2,3 \; .
\end{align}
Moreover, the $*$-algebra $B$ is made of coinvariant elements of the corresponding real group by a suitable real subgroup.

 For $q \in \IR$, we denote   $\asph$ the $*$-algebra $B$ of coinvariant elements of $\cO(SO_q(3,\IR))$ with respect to the coaction of its quantum subgroup $\cO(SO(2,\IR))$. We call $\asph$ (the algebra of coordinate functions of) the quantum 
(Grassmannian) sphere $\sph$. 
In the limit $q=1$ it reduces to the coordinate algebra over the Grassmannian $Gr(1,3)\simeq SO(3)/SO(2) \simeq S^2$ of oriented lines in $\IR^3$.   
In fact, the sphere $\sph$ is isomorphic to the standard  
  Podle\'s sphere $\sphf$ of \cite{po87}.

For $|q|=1$, we denote $\ahyp$ the $*$-algebra $B$ of coinvariant elements of $\cO(SO_q(1,2))$ with respect to the coaction of its quantum subgroup $\cO(SO(1,1))$. We call $\ahyp$ (the algebra of coordinate functions of) the quantum hyperboloid $\hyp$. In the limit $q=1$ it reduces to the coordinate algebra over the hyperboloid.

Again, as in \S \ref{sec:qvs}, the reason for the names and the nature of the spaces above is made evident when using cartesian coordinates. Let us make the following change of generators:
\beq
\Y{1}:= \mu \, \ii \, \frac{1}{\sqrt{2}} \left(- \alpha \y{1} + \beta \y{3} \right)
\, , \quad 
\Y{2}:=\y{2}
\, , \quad 
\Y{3}:=\frac{1}{\sqrt{2}} \left(\alpha\y{1} + \beta \y{3} \right)
\eeq
with $\alpha, \beta, \mu \in \IC$ such that
$$
\alpha \beta=q^\half \frac{  (1+q)}{(1+q^2)} \, , \qquad 
\mu= \left\{ \begin{array}{l} 1 \; \mbox{ if } q \in \IR
\\
\\
-\ii \; \mbox{ if } |q|=1
\end{array} \right. \, .
$$
Notice that $(\alpha \beta)^*=\alpha \beta$ for  both choices of $q$. Provided we choose $\beta= q^\half \bar{\alpha}$ for $q \in \IR$ and $\alpha=\bar{\alpha},\, \beta=\bar{\beta}$ for $|q|=1$, for both $*$-structures in \eqref{*y}, the generators $\Y{k}$ are real, 
$$
(\Y{k})^*=\Y{k} , \qquad k=1,2,3.
$$
Using relations \eqref{1331}, we   compute
$$ \mu^2 \Y{1}^2  +\Y{3}^2= \frac{q^\half  (1+q)}{(1+q^2)} (\y{1}\y{3} +
\y{3}\y{1})= - \frac{1}{(1+q^2)} \left[  (1+q^2) \y{2}^2 -(1-q)^2 \y{2} -2q \right] 
$$ 
and thus in terms of the real generators $\Y{k}$  the quadratic condition \eqref{quadric-y} reads 
\beq
\mu^2 \Y{1}^2 + \Y{2}^2 +\Y{3}^2 - \frac{ (1-q)^2}{1+q^2} \Y{2}= \frac{2q}{1+q^2} \, .
\eeq
In the classical limit $q \rightarrow 1$ this reduces to
$$
\mu^2 \Y{1}^2 + \Y{2}^2 +\Y{3}^2 = 1
$$
which is a two-sphere if $\mu^2 =1$, or a hyperboloid if $\mu^2 = -1$. \medskip

Let us finally observe (for future use in \S \ref{sec:*structU}) that by construction the subalgebra $B$ also carries a left coaction of $\asoq{3}$ given by the restriction of the coproduct of $\asoq{3}$ to the elements $\u{k}{2}$ generating $B$: the map
\beq\label{coact-left}
\rho=\Delta_|{_B}:  B \to \asoq{3} \ot B \; , \quad \u{k}{2} \mapsto \sum_m \u{k}{m} \ot \u{m}{2}
\eeq
makes $B$ a left $\asoq{3}$-comodule algebra.
The coaction map $\rho$ in \eqref{coact-left} is  a $*$-map for both values of $q$ and thus $B$ is a comodule $*$-algebra, or quantum $*$-algebra, with respect to the corresponding
real forms of $\asoq{3}$.

\subsection{Line bundles}

In general, given a right $H$-comodule algebra $A$ with coaction
$
\delta: A \to A \ot H$, $\delta(a) = \zero{a} \ot \one{a}
$ 
and a left    $H$-comodule $V$  with coaction
$
\gamma: V \to H \ot V$, $\gamma(v) =  \mone{v} \ot \zero{v}$,
sections of the vector bundle associated with the corepresentation  $\gamma$ can be identified with linear maps
 $\phi: V \to A$
 which are $H$-equivariant
\beq\label{seeq}
\zero{\phi(v)} \ot \one{\phi(v)} = \phi(\zero{v}) \ot S(\mone{v}) \, .
\eeq
The collection $\mathcal{E}$ of such maps is a left $B$-module for $B\subseteq A$ the subalgebra of coinvariant elements for the $H$-coaction. 

For the $H=\cO(SO(2))$ Hopf-Galois extension $B=\asoq{3}^{co\, \cO(SO(2))} \subset \asoq{3}$ irreducible corepresentations of $\cO(SO(2))$, which are one dimensional and labelled by an integer, will yield line bundles. Consider any such a corepresentation
\beq\label{co-equiv}
\gamma_n : \IC \to \cO(SO(2)) \ot \IC, \qquad \gamma_n(1) = 1 \ot z^{-n}  
\eeq 
fon any integer $n$. From the coaction \eqref{coact-right} the first column of the matrix $u$ will transform by 
$z^{-n}$ while the last column will transform by $z^{n}$. Thus, using the generators \eqref{xb} and \eqref{ea}, a set of generators of the corresponding $B$-module $\mathcal{E}_n$ of sections is given by
\begin{align}
& \xi_J := \xi_{j_n} \cdots \xi_{j_1} , \qquad J = (j_1, \cdots , j_n) \in \{1,2,3\}^n \qquad \mbox{for} \quad n \geq 0 \nn \\
& \alpha_I := \alpha_{i_{-n}} \cdots \alpha_{i_1} , \qquad I = (i_1, \cdots , i_n) \in \{1,2,3\}^{-n} \qquad \mbox{for} \quad n \leq 0 \, .
\end{align}
Indeed, for $n \geq 0$, one finds that
$$
\delta(\xi_J) = \zero{(\xi_{j_n} \cdots \xi_{j_1})} 
\ot \one{(\xi_{j_n} \cdots \xi_{j_1})} = (\xi_{j_n} \cdots \xi_{j_1}) \ot z^n 
= (\xi_{j_n} \cdots \xi_{j_1}) \ot S(z^{-n}),
$$
thus fulfilling condition \eqref{co-equiv}. The case for negative $n$ works similarly.  
The modules $\mathcal{E}_n$ are line bundles of even degree $2n$. To see this, one 
finds suitable idempotents  $p_n $ in $\M_{| 2n | +1}(B)$ and identifies 
$\mathcal{E}_n \simeq B^{| n | +1} p_n$ as left $B$-modules.  

The idempotents $p_n$ are representatives of classes in the
K-theory of $B$, $[p_n]\in K_0(B)$. One computes the
corresponding rank and degree  by pairing them with non-trivial
elements in the dual K-homology, that is  with (the class of) non-trivial
Fredholm modules $[\mu]\in  K^0(B)$. For this, one first calculates
the corresponding Chern characters in the cyclic homology
$\chern_\bullet(p_n) \in \mathrm{HC}_\bullet(B)$ and cyclic
cohomology $\chern^\bullet(\mu)\in \mathrm{HC}^\bullet(B)$
respectively, and then uses the pairing between cyclic homology and
cohomology.

The Chern character of the idempotents $p_n$ has a non-trivial
component in degree zero $\chern_0(p_n)\in \mathrm{HC}_0(B)$ given
simply by a (partial) matrix trace
$\chern_0(p_n) := \tr (p_n)$
and thus $\chern_0(p_n)\in B$. Dually, one needs a cyclic zero-cocycle,
i.e. a trace on $B$. 
There are indeed two such traces. One is the restriction of the counit $\varepsilon$ of $\asoq{3}$ to $B \subset \asoq{3}$; this computes the rank of the bundle.  On generators is given by
\beq\label{co-unsp}
\varepsilon(y_1)= \varepsilon(y_3)=0, \quad \varepsilon(y_2) = 1. 
\eeq
The second `singular' trace was obtained in \cite{MNW} and it is a
trace on $B  / \IC$, that is it vanishes on $\IC \subset B$; it computes the degree. 
Its values on generators of $\mathcal{O}(SL_{\s}(2))$ given in \eqref{SLs2} was computed 
in \cite{H00} to be (the parameter $q$ there is mapped to $\s^{-1}$ here), 
$$
\mu\left((b \,c)^k\right) = (-1)^k \frac{\s^{-k}}{1-\s^{-2k}} = (-1)^k \frac{ q^{\frac{1}{2} k}}{q^{k} - 1} , \qquad k>0 \ .
$$
Using the identification \eqref{dc} this can be translated to the generator $y_2$ of the algebra $\asphf$:  
from $y_2 = 1+ (q^{\half} + q^{-\half}) bc$ one computes that 
\beq\label{muy}
\mu((y_2-1)^k) = (-1)^k \frac{(q+1)^k}{q^k-1} . 
\eeq

Let us first illustrate the above for the lowest values $n=\pm 1$. In these cases a collection of generators for the modules of sections is given by $(\u{1}{1}, \u{2}{1}, \u{3}{1})$ and $(\u{3}{3}, \u{2}{3}, \u{1}{3})$ respectively. 
 The corresponding idempotents are the matrices
\beq\label{p+}
p_{+1}:= \begin{pmatrix}
\u{1}{1}  
\\
 \u{2}{1}   
\\
\u{3}{1}  
\end{pmatrix}
\begin{pmatrix}
\u{3}{3}, & q^{-\half}  \u{2}{3}, & q^{-1}\u{1}{3} 
\end{pmatrix} \; 
\eeq
and 
\beq\label{p-}
p_{-1}:= \begin{pmatrix}
 \u{3}{3} 
\\
 \u{2}{3} 
\\
 \u{1}{3}  
\end{pmatrix}
\begin{pmatrix}
\u{1}{1}, & q^{\half} \u{2}{1}, & q\, \u{3}{1}     
\end{pmatrix} .
\eeq
Since $p_{+1}$ has entries $(p_{+1})_{ij}= u_{i1}  S(u)_{1j}$, the identity \eqref{11}
 implies that 
$p_{+1}$ is an idempotent $p_{+1}^2= p_{+1}$. 
Similarly, for   $p_{-1}$ of components $(p_{-1})_{ij}= u_{i3}  S(u)_{3j}$, the result $p_{-1}^2= p_{-1}$ follows from 
\eqref{22}.
From Proposition \ref{prop:B} the entries of $p_{\pm 1}$ belong to the subalgebra $B$.
Next, using the list in Appendix \ref{app:B} for quadratic coinvariant elements and the first equality in \eqref{1331}, for the partial trace of these idempotents one computes
\begin{align}\label{trace-p1}
\tr(p_{+1}) &= \u{1}{1} \u{3}{3} + q^{-\half} \u{2}{1} \u{2}{3} + q^{-1} \u{3}{1}\u{1}{3} \nn  \\
&= 1 + (q-1)(y_2-1) + \frac{(q-1)^2}{q+1} (y_2-1)^2 .
\end{align}
Then, using $\varepsilon(y_2) = 1$ one gets
$$
\hs{[\varepsilon]}{[p_{+1}]}:= \varepsilon\left(\chern_0(p_{+1}))\right) = 1 \ .
$$
Finally, using the vanishing of $\mu$ over the scalars and \eqref{muy}
one gets 
\begin{align}\label{ind}
\hs{[\mu]}{[p_{+1}]}
&:= \mu\left(\chern_0(p_{+1})\right) = - (q-1) \frac{(q+1)}{q-1} + (q-1) \frac{(q+1)^2}{q^2-1} \nn \\
&\:=
- (q+1) + (q-1) = -2 \, . 
\end{align}
With a similar computation one gets $\hs{[\varepsilon]}{[p_{-1}]}=1$ and $\hs{[\mu]}{[p_{-1}]}=2$.

For a general $n \geq 0$ consider two vector valued functions of components
$$
\ket{\psi_n}_J := \xi_J = \xi_{j_n} \cdots \xi_{j_1}, \qquad \bra{\phi_n}_J := \eta_J = (\eta_{j_1} \cdots \eta_{j_n}) , 
\qquad J = (j_1, \cdots , j_n) \in \{1,2,3\}^n . 
$$
We have already observed that from \eqref{11} one has
$$
\hs{\phi_n}{\psi_n} = \sum_{J \in \{1,2,3\}^n} \, \eta_{j_1} \cdots \eta_{j_n} \, \xi_{j_n} \cdots \xi_{j_1} 
 \ot z^n = 1 .  
$$
Thus the matrix $p_n = \ket{\psi_n} \bra{\phi_n}$ of components $(p_n)_{KJ} = \xi_J \eta_K$ is an idempotent. 
Similarly, for $n \leq 0$ we take 
$$
\ket{\psi_{-n}}_I := \alpha_I  = \alpha_{i_{-n}} \cdots \alpha_{i_1} , 
\quad \bra{\phi_{-n}}_I := \beta_I = \beta_{i_1} \cdots \beta_{i_{-n}} , 
\quad I = (i_1, \cdots , i_n) \in \{1,2,3\}^{-n} .
$$
and now $\hs{\phi_{-n}}{\psi_{-n}} = 1$ and the idempotent is the matrix $p_{-n} = \ket{\psi_{-n}} \bra{\phi_{-n}}$.

Using an inductive argument and result \eqref{ind}, we show the following.

\begin{prop}\label{prop:pn}
For $n \geq 0$ the modules $\mathcal{E}_n$ are line bundles of even degree $-2n$, that is 
\begin{align}
\hs{[\varepsilon]}{[p_{n}]} = 1 \, \qquad  \hs{[\mu]}{[p_{n}]} = - 2n .  
\end{align}
For $n \leq 0$ one gets $\hs{[\varepsilon]}{[p_{-n}]} = 1$ and positive degree $\hs{[\mu]}{[p_{-n}]} = - 2n$. 
\end{prop}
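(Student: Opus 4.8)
The plan is to evaluate the two functionals on the degree-zero Chern character $\chern_0(p_n)=\tr(p_n)=\sum_{K}\xi_K\eta_K$ and to run an induction on $n$. The rank needs no induction: $\varepsilon$ is an algebra map with $\varepsilon(\u{i}{j})=\delta_{ij}$, so $\varepsilon(\xi_j)=\varepsilon(\eta_j)=\delta_{j1}$; hence in $\varepsilon(\tr(p_n))=\sum_{K}\varepsilon(\xi_K)\varepsilon(\eta_K)$ only the constant multi-index $K=(1,\dots,1)$ contributes, and $\hs{[\varepsilon]}{[p_{n}]}=1$ for every $n\ge 0$.

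For the degree I would first isolate the recursion behind the whole computation. Writing $K=(k_1,\dots,k_n)$ and pulling out the outermost index $k_n$ (pure reindexing, no relations used) gives $\tr(p_n)=\Phi\big(\tr(p_{n-1})\big)$, where $\tr(p_0)=1$ and $\Phi\colon B\to B$ is the transfer map $\Phi(b):=\sum_{j}\xi_j\,b\,\eta_j=\u{1}{1}\,b\,\u{3}{3}+q^{-\half}\u{2}{1}\,b\,\u{2}{3}+q^{-1}\u{3}{1}\,b\,\u{1}{3}$; thus $\tr(p_n)=\Phi^{n}(1)$. I would then prove by induction that $\Phi$ preserves $\IC[\y{2}]$, so that $\tr(p_n)=P_n(\y{2})$ for an explicit polynomial $P_n$ with $P_n(1)=1$. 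This is where the commutation relations of $\y{2}=\u{2}{2}$ with the first and third columns, together with the quadratic relations \eqref{1331} that collapse the products $\u{i}{1}\u{j}{3}$ into polynomials in $\y{2}$, enter — exactly as in the base computation \eqref{trace-p1}.

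Evaluating $\mu$ on $P_n$ termwise through its values \eqref{muy} turns the recursion $\tr(p_n)=\Phi(\tr(p_{n-1}))$ into $\hs{[\mu]}{[p_{n}]}=\hs{[\mu]}{[p_{n-1}]}-2$; equivalently, the key identity to check is
\[
\mu(\Phi(b))=\mu(b)-2\,\varepsilon(b),\qquad b\in B .
\]
Since $\mu$ vanishes on $\IC$ one has $\hs{[\mu]}{[p_{0}]}=\mu(1)=0$, the first step reproduces \eqref{ind}, and the recursion integrates to $\hs{[\mu]}{[p_{n}]}=-2n$. The main obstacle is precisely this per-step identity, i.e. controlling $\Phi$ on $\IC[\y{2}]$ well enough to see that the correction is the $n$-independent constant $-2$ rather than something drifting with $n$. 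It is worth noting why the cheap argument fails: $\mu$ cannot be an ordinary trace here, for if it were then \eqref{11} would give $\mu(\Phi(b))=\sum_j\mu(\eta_j\xi_j b)=\mu\big(\big(\sum_j\eta_j\xi_j\big)b\big)=\mu(b)$ and the degree would be the constant $-2$; it is the modular (twisted-trace) nature of $\mu$ that breaks this cancellation and supplies the term $-2\,\varepsilon(b)$.

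Finally, the case $n\le 0$ is entirely parallel: one replaces the pair $(\xi_j,\eta_j)$ by $(\alpha_j,\beta_j)$ of \eqref{ea}, the contraction \eqref{11} by \eqref{22}, and the base case by the analogous $\hs{[\mu]}{[p_{-1}]}=2$; the same recursion then delivers the positive degree $\hs{[\mu]}{[p_{-n}]}=-2n$.
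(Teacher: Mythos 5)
Your argument is correct and is essentially the proof in the paper: the recursion $\tr(p_n)=\Phi\big(\tr(p_{n-1})\big)$ with $\Phi(b)=\sum_j\xi_j\,b\,\eta_j$ is precisely the paper's inductive step, and your key identity $\mu(\Phi(b))=\mu(b)-2\,\varepsilon(b)$ on $\IC[\y{2}]$ is exactly what the paper verifies monomial-by-monomial on the powers $X^J=(q+1)^{-J}(\y{2}-1)^J$ using the commutation relations \eqref{comm-uX}, the identities \eqref{id-uX} and the values \eqref{muy}. The only difference is cosmetic: the paper additionally derives the closed-form coefficients $C_J^{(n)}$ of \eqref{trace-formula}, which your streamlined per-step recursion does not actually need for the degree computation.
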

\begin{proof}
The result rests on a recursion formula for the trace of the idempotents $\tr(p_n)$. 
For
for $n\geq 0$, one finds 
\beq\label{trace-formula}
\tr(p_n) = \sum_J (p_n)_{JJ} = 1 + \sum_{J=1}^{2n}(q+1)^{-J} \, C_J^{(n)} (y_2-1)^J \, , \qquad C_J^{(n)} = \prod_{k=0}^{J-1} (q^{2n-k} -1) . 
\eeq
We prove the formula by induction. We set here $X:=(q+1)^{-1}(y_2-1)$ to simplify notation. 
Firstly, out of the commutation relations \eqref{rtt-exp} one finds
\beq\label{comm-uX}
\u{1}{1} X^J= q^{2J} X^J \u{1}{1} \; , \quad
\u{2}{1} X^J= q^{J} X^J \u{2}{1}\; , \quad
\u{3}{1} X^J=  X^J \u{3}{1}
\eeq
as well as, from the computations in Appendix \ref{app:B}, the identities
\beq\label{id-uX}
 \u{1}{1} \u{3}{3} = 1 + (q +q^2) X + q^3 X^2 \; , \;
q^{-\half} \u{2}{1} \u{2}{3}  =  -(q+1) (X + q X^2) \; , \;
 \u{3}{1}\u{1}{3} =    q X^2      \, .
\eeq
Formula \eqref{trace-formula} is verified for  $n=1$: it is just \eqref{trace-p1}.
Assume it holds for $n$, then
\begin{align*}
\tr(p_{n+1}) 
&=  \u{1}{1} \tr(p_n) \u{3}{3} +  q^{-\half} \u{2}{1} \tr(p_n) \u{2}{3}   + q^{-1} \u{3}{1}\tr(p_n) \u{1}{3} 
\\
&= \tr(p_1) +   \sum_{J=1}^{2n} \, C_J^{(n)} \left( \u{1}{1} X^J \u{3}{3} +  q^{-\half} \u{2}{1} X^J  \u{2}{3}   + q^{-1} \u{3}{1}X^J  \u{1}{3} \right)
\\
&= \tr(p_1) +   \sum_{J=1}^{2n} \, C_J^{(n)} X^J 
\Big( q^{2J} \big(1 + (q +q^2) X + q^3 X^2 \big) - q^J (q+1) (X + q X^2) +X^2  \Big)
\end{align*}
using \eqref{comm-uX} followed by \eqref{id-uX} for the last identity.
Then
\begin{align}\label{traccia}
\tr(p_{n+1})  &= \tr(p_1) +   \sum_{J=1}^{2n} \, C_J^{(n)} X^J \Big(
q^{2J}+(q^{J+1}-1)(q^{J+1}+q^J) X + (q^{J+1}-1)(q^{J+2}-1) X^2
\Big) 
\nn  \\
&= 1 + (q^2-1)X + (q^2-1)(q-1) X^2 
 +   \sum_{J=1}^{2n} \, q^{2J} C_J^{(n)} X^J 
 \\
 & \quad
 +   \sum_{J=2}^{2n+1} \, (q^{J}-1)(q^{J-1} + q^J) C_{J-1}^{(n)} X^J 
 +   \sum_{J=3}^{2n+2} \, (q^{J}-1)(q^{J-1} -1) C_{J-2}^{(n)} X^J  \; .
 \nn
\end{align}
Finally, using properties 
\beq
C_J^{(n)} = (q^{2n+1-J}-1) C_{J-1}^{(n)} \, , \qquad  C_{J+2}^{(n+1)}=  (q^{2n+2}-1)(q^{2n+1} -1) C_J^{(n)}
\eeq
for the coefficients $C_J^{(n)}$, we get
\begin{align*} 
\tr(p_{n+1})  &=  1 + (q^{2n+2}-1)X + \Big( (q^2-1)(q-1) +q^{4} C_{2}^{(n)} + 
(q^2-1)(q + q^2) C_{1}^{(n)} \Big) X^2 
 \\
 & \quad
 +   \sum_{J=3}^{2n} \, \Big( q^{2J} (q^{2n+1-J}-1)  (q^{2n+2-J}-1)  
 +  (q^{J}-1)(q^{J-1} + q^J) (q^{2n+2-J}-1)  
  \\
 & \qquad  \qquad
 +    (q^{J}-1)(q^{J-1} -1) \Big) C_{J-2}^{(n)}  X^J  
  \\
 & \quad
 +  (q^{2n+1}-1)  \Big((q^{2n} + q^{2n+1}) (q-1)
 +   (q^{2n}-1) \Big) C_{2n-1}^{(n)} X^{2n+1}
  \\
 & \quad
 +    (q^{2n+2}-1)(q^{2n+1} -1) C_{2n}^{(n)} X^{2n+2}
 \\
 & =  1 + (q^{2n+2}-1)X +  (q^{2n+2}-1)(q^{2n+1}-1) X^2 
 \\
& \quad
 +   (q^{2n+2}-1)(q^{2n+1}-1)   \sum_{J=3}^{2n} \Big(  C_{J-2}^{(n)}  X^J  
  +  C_{2n-1}^{(n)} X^{2n+1}
 +    C_{2n}^{(n)} X^{2n+2} \Big)
 \\
 & = \sum_{J=1}^{2n+2} \, C_J^{(n+1)} X^J  \; .
\end{align*}
\medskip
Being $\varepsilon(y_2) = 1$, or $\varepsilon(X) = 0$, one immediately gets $\hs{[\varepsilon]}{[p_{+n}]} = 1$.

\noindent
For the computation of the degree we also proceed by induction. 
 From \eqref{muy} one has $\mu(X^J)= (-1)^J \frac{1}{q^J -1}$ from which one deduces
$$
\mu(X^{J+1})= - \frac{q^J -1}{q^{J+1} -1}\mu(X^J)
\; , \quad 
\mu(X^{J+2})= \frac{q^J -1}{q^{J+2} -1}\mu(X^J) \; .
$$
We use these formulas in the first expression in \eqref{traccia} for the trace of $p_{n+1}$:
\begin{align*} 
&\hs{\mu}{\tr(p_{n+1})} = \hs{\mu}{\tr(p_1)} + 
\\
& \qquad +   \sum_{J=1}^{2n} \, C_J^{(n)}  \Big(
q^{2J} X^J+(q^{J+1}-1)(q^{J+1}+q^J) X^{J+1} + (q^{J+1}-1)(q^{J+2}-1) X^{J+2}
\Big) 
   \\
&\qquad\qquad= -2
+ 
  \sum_{J=1}^{2n} \, C_J^{(n)}  \Big(
q^{2J}  -(q^{J}-1)(q^{J+1}+q^J)   + (q^{J+1}-1)(q^{J}-1)  
\Big) \mu(X^J) 
\\
& \qquad\qquad= -2 + \sum_{J=1}^{2n} \, C_J^{(n)}    \mu(X^J) 
\\
& \qquad\qquad =
-2 +\hs{\mu}{\tr(p_{n})}  = -2 (n+1) \; .
\qedhere
\end{align*}
\end{proof}

\begin{rem}
For $q\in \IR$ and $*$-structure \eqref{*O3R}, the idempotent $p_{n}$ is self-adjoint, $p_{n}^*=p_{n}$.
This follows from the fact that $(\ket{\psi_{n}}_J)^* =\bra{\psi_{n}}_J$, for each $J$,
being  $\u{1}{1}^*=\u{3}{3} $, $\u{2}{1}^*=q^{-\half}\u{2}{3} $ and $\u{3}{1}^*=q^{-1}\u{1}{3} $.
We stress that these self-adjoint idempotents are different from the ones used for Podle\'s sphere (see e.g. \cite{H00}), a fact that reflects in a simpler formula for their degree. 
In contrast the idempotents $p_n$ are not self-adjoint for the $*$-structure \eqref{*O3C} when $|q|=1$.
\end{rem}

\section{The Casimir element}
Aiming at the study of laplacian operators on the two $*$-quantum homogeneous spaces of $\asoq{3}$ in \S\ref{sec:*qs}, and gauged versions on bundles over them in the line of \cite{lrz}, in this section we study a
Casimir element. This operator is constructed from the actions of a dual Hopf algebra and is diagonalised in Theorem \ref{cC-diag}.

\subsection{The dual Hopf algebra $\slq$ and its real forms}\label{sec:*structU}

From Drinfel'd--Jimbo construction of quantum universal envelopping algebras it is known that $\cU_{q^{1/2}}(so(3)) \simeq \slq$. 
On the other hand as recalled in \S\ref{sec:dc}, there is an isomorphism $\asoq{3} \simeq \cO(SL_{q^{1/2}})(2) / \IZ_2$. We shall then work out 
a dual pairing between $\asoq{3}$ and $\slqh$. 

The algebra $\slqh$ is generated  by elements $K,K^{-1}, E,F$ subject to the relations
\begin{align}\label{slm}
K^{\pm}E = q^{\pm 1} EK^{\pm}  \, , \quad 
K^{\pm}F =q^{\mp 1} FK^{\pm}  \, , \quad
EF-FE= \frac{K-K^{-1}}{q^\half - q^{-\half}} \;
\end{align}
together with $K K^{-1} = K^{-1}K =1$. It is a Hopf algebra with coproduct, counit and antipode given respectively by
$$
\Delta(K^{\pm 1})= K^{\pm 1} \ot K^{\pm 1}
\; , \quad
\Delta(E)= E \ot K + 1 \ot E
\; , \quad 
\Delta(F)= F \ot 1 +K^{-1} \ot F \; ,
$$
$$
\varepsilon(K^{\pm 1})= 1
\; , \quad
\varepsilon(E)= 0
\; , \quad 
\varepsilon(F)= 0 
$$
$$
S(K^{\pm 1})= K^{\mp 1}
\; , \quad
S(E)= - E K^{-1} 
\; , \quad 
S(F)= -KF \; .
$$
See e.g. \cite[\S 3.1]{ks}.

The non zero values of the pairing 
$\langle \cdot , \cdot \rangle:\slqh \times \asoq{3} \to \IC$ on the algebra generators, besides $\langle 1, \u{k}{k} \rangle=1$ for $k=1,2,3$, and
$\langle K^{\pm 1}, 1 \rangle= 1$, are found to be 
\begin{align}\label{pairing}
& \langle K , \u{1}{1} \rangle = q^{-1}
\; , \qquad 
\langle K , \u{2}{2} \rangle =1 
\; , \qquad 
\langle K , \u{3}{3} \rangle = q \nn
\\ 
& 
\langle K^{-1} , \u{1}{1} \rangle = q
\; , \qquad 
\langle K^{- 1} , \u{2}{2} \rangle =1 
\; , \qquad 
\langle K^{-1} , \u{3}{3} \rangle =  q^{-1} \; , \nn
\\ 
& 
\langle E , \u{2}{1} \rangle = \alpha \eta
\; , \qquad 
\langle E , \u{3}{2} \rangle = - \alpha q^\half \eta \; , 
\nn
\\
& 
\langle F , \u{1}{2} \rangle = \alpha^{-1} \eta
\; , \qquad  
\langle F , \u{2}{3} \rangle = - \alpha^{-1} q^{-\half} \eta\; ,
\end{align}
where $\eta := (q^\half + q^{-\half})^\half$ and $\alpha\in \IC \setminus \{0\}$.

The pairing extends to the whole algebras by the rules 
 $\langle fg,a \rangle=\langle f \ot g ,\Delta(a) \rangle= \langle f ,a_{(1)} \rangle    \langle 
g ,a_{(2)} \rangle$ and
$\langle f,ab \rangle=\langle \Delta(f), a \ot b \rangle= \langle f_{(1)} , a\rangle    \langle 
f_{(2)} b \rangle$, for all $f,g \in \slqh$ and $a,b \in \asoq{3}$.
It satisfies 
$\langle 1, a \rangle=\varepsilon(a)$, $\langle f, 1 \rangle=\varepsilon(f)$ and
$\langle S(f), a \rangle=\langle f, S(a) \rangle$ for  each $f \in \slqh$ and $a \in \asoq{3}$.

\begin{rem}
The extra parameter $\alpha$ in \eqref{pairing} can be re-absorbed by the Hopf algebra automorphism 
of $\slqh$, which rescales $E \mapsto \alpha^{-1} E$, $F \mapsto \alpha F$, 
$K \mapsto K$ \cite[Prop. 3.6]{ks}. We hence fix $\alpha = 1$. 
\end{rem}

It follows by standard arguments in Hopf algebra theory that each left (respectively  right) $\asoq{3}$-comodule algebra $\cA$ carries  a right representation $\rhd $ (respectively  left representation $\lhd $)  of the dual algebra $\slqh$. In details, if $\cA$ is a left comodule algebra via $\rho: \cA \to \asoq{3} \ot \cA$, $a \mapsto a_{(-1)} \ot a_{(0)}$, then $\cA$ carries the right action 
$$
\lhd: \cA \ot \slq \to \cA \; , \quad  a \lhd f := \langle f , a_{(-1)}\rangle   a_{(0)} , \quad a \in \cA, f \in \slq .
$$
If $\cA$ is a right comodule algebra via $\delta: \cA \to \cA \ot \asoq{3}$, $a \mapsto a_{(0)} \ot a_{(1)}$, then $\cA$ carries the  left action 
$$\rhd: \slq \ot \cA \to \cA \; , \quad  f \rhd a := a_{(0)}  \langle f , a_{(1)}\rangle , \quad a \in \cA, f \in \slq .
$$
For $\cA=\asoq{3}$ with left and right coactions given by the coproduct, 
the right and left actions of $\slqh$ on generators $\u{j}{k}$ of $\asoq{3}$ read
$$
\u{j}{k} \lhd f = \langle f , \u{j}{m} \rangle  \u{m}{k}  \qquad \text{and} \qquad f \rhd \u{j}{k} = \u{j}{m} \langle f , \u{m}{k} \rangle .
$$

Explicitly,  the right action is 
\begin{align}\label{ractso}
&  \u{1}{k} \lhd K^{\pm 1} = q^{\mp 1} \u{1}{k} \; , \quad 
&& \u{2}{k}\lhd K^{\pm 1} =  \u{2}{k} \; , \quad 
&&   \u{3}{k} \lhd K^{\pm 1} = q^{\pm 1} \u{3}{k} \; ,   \nn
\\
&   \u{1}{k}\lhd E= 0 \; , \quad 
&& \u{2}{k}\lhd E=  \eta \,  \u{1}{k} \; , \quad 
&&  \u{3}{k}\lhd E= -q^\half \eta \,  \u{2}{k} \; , \nn 
\\
&  \u{1}{k}\lhd F= \eta \,  \u{2}{k} \; , \quad 
&&  \u{2}{k}\lhd F= -q^{-\half} \eta \,  \u{3}{k} \; \quad
&& \u{3}{k}\lhd F=0\; ,
\end{align}
and the left action is given by
\begin{align}\label{lactso}
&  K^{\pm 1} \rhd \u{j}{1} = q^{\mp1} \u{j}{1} \; , \quad 
&&  K^{\pm 1} \rhd  \u{j}{2} =  \u{j}{2} \; , \quad 
&&  K^{\pm 1} \rhd   \u{j}{3} = q^{\pm 1} \u{j}{3} \; ,   \nn
\\
&  E \rhd \u{j}{1} = \eta \, \u{j}{2} \; , \quad 
&& E \rhd  \u{j}{2} = - q^\half  \eta \, \u{j}{3} \; , \quad 
&&  E \rhd \u{j}{3}  = 0 \; , \nn 
\\
&  F \rhd \u{j}{1} = 0  \; , \quad 
&& F \rhd \u{j}{2} = \eta \, \u{j}{1}  \; \quad
&&  F \rhd  \u{j}{3}  = - q^{-\half} \eta \,  \u{j}{2}\; .
\end{align}

Since the left coaction of $\asoq{3}$ on itself descends to $B=\asoq{3}^{co\, \cO(SO(2))}$, see \eqref{coact-left}, the right action \eqref{ractso} preserves $B$. Explicitly, on the generators $\y{k}:=\u{k}{2}$ of $B$, the action $\lhd: B \ot \slq \to B$ is given by
\begin{align}
&  \y{1} \lhd K^{\pm 1} = q^{\mp 1} \y{1} \; , \quad 
&& \y{2}\lhd K^{\pm 1} =  \y{2} \; , \quad 
&&   \y{3} \lhd K^{\pm 1} = q^{\pm 1} \y{3} \; ,   \nn
\\
&   \y{1}\lhd E= 0 \; , \quad 
&& \y{2}\lhd E=  \eta \,  \y{1} \; , \quad 
&&  \y{3}\lhd E= -q^\half \eta \,  \y{2} \; , \nn 
\\
&  \y{1}\lhd F= \eta \,  \y{2} \; , \quad 
&&  \y{2}\lhd F= -q^{-\half} \eta \,  \y{3} \; \quad
&& \y{3}\lhd F=0\; .
\end{align}
For the left action \eqref{lactso} this is not the case. The generators $E$ and $F$ do not preserve $B$ while the generator $K$ does and acts as the identity. Its left action is indeed dual to the right coaction in \eqref{coact-right} of the generator $z$ of $\cO(SO(2))$ on $\asoq{3}$ and we could equivalently define the algebra of coinvariant elements $B$ as given by invariants
\beq
B = \{ b \in \asoq{3} \, | \, K \rhd b = b \} . 
\eeq

Depending on the values of the deformation parameter $q$, the Hopf algebra $\slqh$ can be equipped with the following real structures \cite[\S 3.1.4]{ks}:
\begin{enumerate}[$\bullet$]
\item 
if $q \in \IR$, there are two (non equivalent)  $*$-structures:
\beq
(K^{\pm 1})^*=K^{\pm 1} \; , \quad E^*= FK \; , \quad F^*= K^{-1} E  
\eeq
with corresponding Hopf $*$-algebra $\uqh(su_2)$ (this is the compact real form) and
\beq
(K^{\pm 1})^*=K^{\pm 1} \; , \quad E^*= -FK \; , \quad F^*= -K^{-1} E
\eeq
with corresponding Hopf $*$-algebra $\uqh(su_{1,1})$; \\

\item
if $|q|=1$ there is only one $*$-structure given by
\beq
(K^{\pm 1})^*=K^{\pm 1} \; , \quad E^*= -E \; , \quad F^*= -F  \;.
\eeq
The corresponding Hopf $*$-algebra is $\uqh(sl_2(\IR))$. 
Classically the Lie algebras $su_{1,1}$ and $sl_2(\IR)$ are isomorphic. 
\end{enumerate}
 
The pairing \eqref{pairing} induces a pairing between the real forms $\uqh(su_2)$ and $\cO(SO_q(3;\IR))$ and between the real forms $\uqh(sl_2(\IR))$ and 
$\cO(SO_q(1,2))$. Indeed the conditions 
\beq\label{parsc}
\langle f^*, a \rangle=\overline{\langle f, S(a)^* \rangle}
\; , \quad
\langle f, a^* \rangle=\overline{\langle S(f)^*,a \rangle} 
\eeq
are satisfied for each $f \in \slqh$ and $a \in \cO(SO_q(3;\IR))$ or $f \in \uqh(sl_2(\IR))$ and $a \in \cO(SO_q(1,2))$. On the other hand, the condition \eqref{parsc} for the pairing \eqref{pairing}  is not satisfied for the algebra $\uqh(su_{1,1})$.

\medskip
We need some notation. For $n \in \IN$ the $q$-integer is defined as  
\beq\label{q-no}
[n] := [n]_{q^\half} := \frac{q^{\frac{n}{2}} - q^{-\frac{n}{2}}}{q^{\half} - q^{-\half}}. 
\eeq
It has properties
\begin{align}
[n] = q^{\frac{n}{2} -\half} \sum_{j=0}^{n-1} q^{-j}  = q^{\frac{-n+1}{2}} \sum_{j=0}^{n-1} q^{j}, \qquad 
[n] = [2] [n-1] - [n-2] .
\end{align}

When the deformation parameter $q$ is not a root of unity, the centre of the algebra $\slqh$ is generated by 
the (quadratic) Casimir element (see \cite[\S 3.1.1]{ks}):
\begin{align}\label{cas}
C_q &:=EF + \frac{q^{-\half}K + q^\half K^{-1}}{(q^\half - q^{-\half})^2} 
= FE + \frac{q^\half K + q^{-\half}  K^{-1}}{(q^\half - q^{-\half})^2}  \nn \\
&\:= \half (EF+FE) + \frac{ q^\half + q^{-\half} }{(q^\half - q^{-\half})^2}  (K + K^{-1}) .
\end{align}

We would like to diagonalise the Casimir as an operator acting on the left on $B$ and use the right action of $\slqh$
to construct a basis of eigenfunctions, since clearly $C_q \rhd (a \lhd f) = (C_q \rhd a) \lhd f$.
As mentioned, 
while $E$ and $F$ do not preserve $B$, both the products $EF$ and $FE$ do. 
On the other hand, the generators $K, K^{-1}$ act on $B$ as the identity and hence
$$
\frac{q^{-\half}K + q^\half K^{-1}}{(q^\half - q^{-\half})^2} \, \rhd b  =\frac{q^{-\half} + q^\half }{(q^\half - q^{-\half})^2} \, b , \quad b \in B .
$$
Thus, we can remove from the Casimir an additive constant and consider the operator 
\beq\label{casb}
\cC_q := C_q - \frac{q^{-\half} + q^\half }{(q^\half - q^{-\half})^2} = EF = FE
\eeq
acting on the left on $B$. On the generators, the  action of $\cC_q$ is easily found to be 
\beq\label{cqn1}
\cC_q \rhd y_k = \eta^2 y_k = [2] y_k , \quad k=1,2,3. 
\eeq

\begin{prop}
There is a vector space decomposition 
$$
B = \oplus_{J\in\IN} V_J 
$$
into irreducible representations $V_J$ of $\slqh$. The spaces $V_J$ are given by
\beq\label{vj}
V_J = \textup{span} \{\y{3}^J \lhd E^m\} = \textup{span}\{y_1^J \lhd F^m\}, \quad
m=0,1, \dots, 2J . 
\eeq
Thus $\y{3}^J$ (respectively $y_1^J$) is the highest (respectively lowest) weight vector of the representation.
\end{prop}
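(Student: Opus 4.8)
The plan is to realise $B$ as a semisimple $\slqh$-module in which each isotypic component appears exactly once, after identifying $\y{3}^J$ and $\y{1}^J$ as the extreme weight vectors of their strings.

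First I would verify that $\y{3}^J$ is a highest weight vector and $\y{1}^J$ a lowest weight vector. Since $B$ is a right $\slqh$-module algebra (the right action \eqref{ractso} being induced from the left $\asoq{3}$-comodule algebra structure), the coproducts $\Delta(F)=F\ot 1 + K^{-1}\ot F$, $\Delta(E)=E\ot K + 1\ot E$ and $\Delta(K)=K\ot K$ translate into the twisted Leibniz rules $(ab)\lhd F = (a\lhd F)\,b + (a\lhd K^{-1})(b\lhd F)$, $(ab)\lhd E = (a\lhd E)(b\lhd K)+a\,(b\lhd E)$ and $(ab)\lhd K = (a\lhd K)(b\lhd K)$. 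From \eqref{ractso} one has $\y{3}\lhd F = 0$, $\y{3}\lhd K = q\,\y{3}$ and $\y{1}\lhd E = 0$, $\y{1}\lhd K = q^{-1}\y{1}$, so a one-line induction gives $\y{3}^J\lhd F = 0$, $\y{3}^J\lhd K = q^J\y{3}^J$ and $\y{1}^J\lhd E = 0$, $\y{1}^J\lhd K = q^{-J}\y{1}^J$. On $B$ the element $F$ raises the $K$-weight by one and $E$ lowers it, so $\y{3}^J$ (weight $q^J$) tops and $\y{1}^J$ (weight $q^{-J}$) bottoms their strings.

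Next I would show that $B$ is a locally finite, hence semisimple, module. The generators span the three-dimensional submodule $\langle \y{1},\y{2},\y{3}\rangle$, and because the action is by (twisted) derivations the span of monomials of total degree $\le k$ is a finite-dimensional submodule; as $q$ is not a root of unity every finite-dimensional $\slqh$-module is completely reducible, so $B = \oplus_{J} V_J^{\oplus m_J}$ with $V_J$ the $(2J+1)$-dimensional irreducible, $J\in\IN$. By standard highest weight theory $m_J$ is the dimension of the space of highest weight vectors of weight $q^J$, and the submodule generated by any such $v$ is irreducible, equal to $\textup{span}\{v\lhd E^m : 0\le m\le 2J\}$ (the string has length exactly $2J+1$ since $2J\in\IN$). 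The heart of the argument is then $m_J=1$. For this I would use that $B$ is an integral domain (a subalgebra of $\asoq{3}$) whose $K$-weight spaces $B_{[n]}=\{b\in B : b\lhd K = q^n b\}$ satisfy $B_{[J]}=\IC[\y{2}]\,\y{3}^J$ for $J\ge 0$ and $B_{[0]}=\IC[\y{2}]$, both following from the relations \eqref{comm-rel-y}--\eqref{1331} by a PBW reduction. Writing a weight-$q^J$ element as $v=w\,\y{3}^J$ with $w\in\IC[\y{2}]$ and using $\y{3}^J\lhd F=0$ in the twisted Leibniz rule gives $v\lhd F=(w\lhd F)\,\y{3}^J$, so by absence of zero divisors $v\lhd F=0$ iff $w\lhd F=0$. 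A short induction from $\y{2}\lhd F=-q^{-\half}\eta\,\y{3}$ and \eqref{comm-rel-y} shows that the leading term of $\y{2}^b\lhd F$ in the basis $\{\y{3}\y{2}^c\}$ of $B_{[1]}$ is a nonzero multiple of $\y{3}\y{2}^{b-1}$; hence $\lhd F$ is injective on $\{w : w(0)=0\}$ and its kernel on $\IC[\y{2}]$ is exactly the constants. Thus the highest weight vectors of weight $q^J$ are precisely $\IC\,\y{3}^J$, giving $m_J=1$. Since $\y{3}^J\ne 0$ every $V_J$ occurs, so $B=\oplus_{J\in\IN}V_J$ with $V_J=\textup{span}\{\y{3}^J\lhd E^m\}$. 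Finally $\y{1}^J$, a lowest weight vector of weight $q^{-J}$, must lie in the unique copy $V_J$ (the only irreducible with lowest weight $q^{-J}$) and spans its bottom weight line, yielding the second description $V_J=\textup{span}\{\y{1}^J\lhd F^m\}$.

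I expect the multiplicity-one count to be the main obstacle: the extreme-weight computations and the appeal to complete reducibility at generic $q$ are routine, but pinning down $\{v : v\lhd K = q^J v,\ v\lhd F = 0\}=\IC\,\y{3}^J$ genuinely needs the algebraic structure of $B$, namely the domain property together with the weight-space description $B_{[J]}=\IC[\y{2}]\,\y{3}^J$.
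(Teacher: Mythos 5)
Your argument is correct, and it is genuinely more self-contained than what the paper offers: the paper's proof is a one-line deferral to the treatment of the Podle\'s sphere in Klimyk--Schm\"udgen \S 4.5.2, which rests on the Peter--Weyl decomposition of the covering quantum group (one decomposes $\cO(SL_{q^{1/2}}(2))$ into matrix coefficients $t^l_{ij}$ and identifies $B$ with the span of the coefficients $t^l_{i0}$, each $l$ contributing one copy of $V_l$). You instead work entirely inside $B$: local finiteness of the degree filtration plus complete reducibility at generic $q$ reduces everything to counting highest weight vectors, and the multiplicity-one statement $\{v \in B : v\lhd K = q^J v,\ v\lhd F=0\} = \IC\,\y{3}^J$ is extracted from the weight-space description $B_{[J]} = \IC[\y{2}]\,\y{3}^J$ together with the injectivity of $\lhd F$ on $\IC[\y{2}]/\IC$. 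Your weight and Leibniz-rule computations are all consistent with \eqref{ractso} and the coproducts (with the right action, $F$ indeed raises and $E$ lowers the $K$-weight, so $\y{3}^J$ sits at the top of its string as claimed). What the paper's route buys is that linear independence comes for free from the Peter--Weyl theorem for the total space; what your route buys is independence from the double-covering identification and a proof that lives on $B$ alone. The one point you should not gloss over is the PBW input: you need the monomials $\y{1}^a\y{2}^b$ and $\y{2}^b\y{3}^c$ to be linearly \emph{independent} in $B$ (not merely spanning), both to identify $B_{[J]}$ with $\IC[\y{2}]\,\y{3}^J$ unambiguously and to conclude $v\lhd F = 0 \Rightarrow w\lhd F = 0$ from $(w\lhd F)\,\y{3}^J=0$; the commutation relations \eqref{comm-rel-y}--\eqref{1331} only give spanning, and independence requires either a Diamond Lemma argument or the identification of $B$ with the standard Podle\'s sphere, which is exactly the fact the paper leans on.
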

\begin{proof}
The proof is analogous to the one in \cite[\S 4.5.2]{ks}.
\end{proof}

\begin{thm} \label{cC-diag}
For each $J\in\IN$ the elements in $V_J$ are eigenfunctions of $\cC_q$ with eigenvalue 
$[J][J+1]$:
\beq 
\cC_q \rhd a = [J][J+1] \, a , \quad \forall a \in V_J. 
\eeq
\end{thm}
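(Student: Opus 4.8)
The plan is to use that $\cC_q$ is central in order to reduce the whole statement to a single vector, and then to identify the resulting scalar. Since $\cC_q = C_q - \tfrac{q^{-\half}+q^\half}{(q^\half-q^{-\half})^2}$ lies in the centre of $\slqh$, its left action commutes with the right action, $\cC_q \rhd (a \lhd f) = (\cC_q \rhd a) \lhd f$ for all $f \in \slq$. By the preceding proposition every $a \in V_J$ is of the form $a = \y{3}^J \lhd E^m$ with $0 \le m \le 2J$, so that
\[
\cC_q \rhd a = \cC_q \rhd (\y{3}^{J} \lhd E^{m}) = (\cC_q \rhd \y{3}^{J}) \lhd E^{m}.
\]
Hence, once I show $\cC_q \rhd \y{3}^{J} = [J][J+1]\, \y{3}^{J}$ on the highest weight vector, this identity together with linearity propagates the eigenvalue to all of $V_J$ as in \eqref{vj}. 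Equivalently, $\cC_q \rhd$ is a right-module endomorphism of the irreducible $V_J$, hence a scalar by Schur, and it remains only to compute that scalar.

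To pin down the scalar I would read off the target value from the commuting right action on the same vector. Since $\y{3}^J$ is the highest weight vector of the spin-$J$ right representation, one has $\y{3}^J \lhd F = 0$ and $\y{3}^J \lhd K = q^{J}\y{3}^{J}$ (immediate from $\Delta F = F \ot 1 + K^{-1}\ot F$, $\Delta K = K \ot K$ and the base relations $\y{3}\lhd F = 0$, $\y{3}\lhd K = q\y{3}$). Writing the central element via the second expression in \eqref{cas} as $\cC_q = FE + \tfrac{q^{\half}K + q^{-\half}K^{-1}}{(q^\half-q^{-\half})^2} - \tfrac{q^\half+q^{-\half}}{(q^\half-q^{-\half})^2}$ and acting on the right gives at once $\y{3}^J \lhd \cC_q = \tfrac{q^{J+\half}+q^{-J-\half}-q^\half-q^{-\half}}{(q^\half-q^{-\half})^2}\,\y{3}^J = [J][J+1]\,\y{3}^J$. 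Because $\cC_q$ is central, it acts by the same scalar on the left and right tensor factors of each Peter--Weyl block $V_J \otimes V_J$ of $\asoq{3}$; in particular its left action on $\y{3}^J$ equals this right value, so that $\cC_q \rhd \y{3}^J = [J][J+1]\,\y{3}^J$.

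The point that needs care, and which I expect to be the main obstacle, is exactly the passage from the right eigenvalue to the left one. If one prefers to avoid the Peter--Weyl identification, the same scalar can be obtained directly by induction on $J$, working entirely with the left action \eqref{lactso}. Expanding $\cC_q = EF$ (see \eqref{casb}) on the product $\y{3}\cdot\y{3}^{J-1}$ via $\Delta(EF)$, and using $E \rhd \y{3} = -q^{\half}\eta\, \u{3}{3}$, $F \rhd \y{3} = \eta\, \u{3}{1}$, the triviality of $K$ on $\y{3}$, the base value \eqref{cqn1} and the inductive value $\cC_q \rhd \y{3}^{J-1} = [J-1][J]\y{3}^{J-1}$, one arrives at a recursion of the shape $\lambda_J = [2] + \lambda_{J-1} + (\text{cross terms})$. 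The substantial step is the evaluation of these cross terms: they involve the off-diagonal entries $\u{3}{1}$ and $\u{3}{3}$ sandwiched between powers of $\y{3}=\u{3}{2}$, and collapsing them requires the row-$3$ commutation relations obtained from \eqref{Rmat3} through \eqref{rtt-exp} together with the quadratic/orthogonality relation. One then checks that their contribution is precisely $[J]\bigl(q^{J/2}+q^{-J/2}\bigr) - [2]$, whence $\lambda_J - \lambda_{J-1} = [J]\bigl(q^{J/2}+q^{-J/2}\bigr) = [J][J+1]-[J-1][J]$; combined with $\lambda_0 = 0$ and $\lambda_1 = [2]$ from \eqref{cqn1}, the recursion yields $\lambda_J = [J][J+1]$. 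This noncommutative bookkeeping of the cross terms (equivalently, the left--right matching of the central scalar) is the only genuinely laborious part.
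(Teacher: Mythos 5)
Your opening reduction --- centrality of $\cC_q$ together with the description \eqref{vj} of $V_J$, so that it suffices to treat the highest weight vector $\y{3}^J$ --- is exactly the paper's first step. After that, your main route is genuinely different: you compute the \emph{right} eigenvalue of $\cC_q$ on $\y{3}^J$ (which is indeed immediate from $\y{3}^J\lhd F=0$ and $\y{3}^J\lhd K=q^J\y{3}^J$ and correctly gives $[J][J+1]$) and transfer it to the left action by arguing that a central element of the dual acts by one and the same scalar on both sides of each Peter--Weyl block. That transfer is the conceptual heart of your proof and, as you yourself flag, it is not free in this context: it requires (i) that $\y{3}^J$ lies in a matrix-coefficient block of an irreducible corepresentation of $\asoq{3}$, i.e.\ a Peter--Weyl (cosemisimplicity) decomposition --- true for $q$ not a root of unity, e.g.\ via the double covering by $\cO(SL_{q^{1/2}}(2))$, but nowhere established in the paper, whose preceding proposition only decomposes $B$ under the \emph{right} action --- and (ii) that the pairing \eqref{pairing} is nondegenerate on each such block, so that Schur's lemma makes the action of the central $\cC_q$ there scalar and hence equal on the two sides. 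With these two inputs your argument is correct and shorter than the paper's; without them it appeals to structure the paper does not provide.

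Your fallback induction is, in outline, the paper's actual proof: expand $\Delta(EF)$ on $\y{3}^{n}\cdot\y{3}$, use the base case \eqref{cqn1}, and evaluate the cross terms. Your guessed value for those cross terms, $[J]\bigl(q^{J/2}+q^{-J/2}\bigr)-[2]$, is arithmetically the correct one (it equals $[J][J+1]-[J-1][J]-[2]$). But producing it --- computing $E\rhd\y{3}^n$ and $F\rhd\y{3}^n$ in closed form, substituting the quadratic identities $\u{3}{1}\u{3}{3}=-q^{3/2}(1+q)^{-1}\y{3}^2$ and $\u{3}{3}\u{3}{1}=-q^{-1/2}(1+q)^{-1}\y{3}^2$ from Appendix~\ref{app:B}, and then verifying the $q$-integer identity $[2]+\bigl(q^{(n+2)/2}+q^{-(n+2)/2}\bigr)[n]=[n+1]\bigl([2][n+1]-2[n]\bigr)$ --- is where essentially all of the paper's proof lives, and in your write-up it is asserted rather than carried out. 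In short: the first route is a valid and more conceptual alternative modulo two unproven structural facts; the second is the paper's route with its only substantive computation deferred.
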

\begin{proof}
In view of \eqref{vj} it is enough to show \eqref{cC-diag} for the highest weight vector 
$\y{3}^J$. Clearly, if $\cC_q \rhd \y{3}^J = [J][J+1] \y{3}^J$, then for each $m=0,1, \dots, 2J$,
$$
\cC_q \rhd (\y{3}^J \lhd E^m) = (\cC_q \rhd \y{3}^J) \lhd E^m =  
[J][J+1] (\y{3}^J \lhd E^m) \,.
$$
Indeed we can show the result at once for the lowest and highest weight vectors.
Using the coproduct 
$$
\Delta(EF)= EF \ot K + K^{-1} \ot EF + q^{-1} EK^{-1} \ot FK + F \ot E 
$$
and recalling from \eqref{lactso} that $K$ and $K^{-1}$ act as the identity on the elements of $B$, the operator $\cC_q$ acts on the product of two elements $a,a'$ as
\begin{align}\label{Dcq}
\cC_q \rhd(aa')= & ((EF) \rhd a) a' + a ((EF) \rhd a') + q^{-1} (E \rhd a) (F \rhd a') + (F\rhd a)(E\rhd a')  \nn
\\
= & (\cC_q \rhd a) a' + a (\cC_q\rhd a') + q^{-1} (E \rhd a) (F \rhd a') + (F\rhd a)(E\rhd a') \, .
\end{align}
We hence need to compute the action of $E$ and $F$ on any power $\y{\ell}^J$ of $\y{\ell}$, $\ell=1,3$. By induction on $n$ one shows that
\begin{align*}
E \rhd \y{\ell}^n &= -q^\half \eta (\sum_{j=0}^{n-1} q^{-j}) \y{\ell}^{n-1} \u{\ell}{3}
= -q^{-\frac{n}{2}+1} \eta [n] \y{\ell}^{n-1} \u{\ell}{3}
\\
F \rhd \y{\ell}^n &=  \eta (\sum_{j=0}^{n-1} q^{j}) \y{\ell}^{n-1} \u{\ell}{1}
= q^{\frac{n-1}{2}} \eta [n] \y{\ell}^{n-1} \u{\ell}{1} \; ,
\end{align*}
where $[n]$ is the $q^\half$-number in \eqref{q-no}.
Next, we prove that $\cC_q \rhd \y{\ell}^n = [n][n+1] \y{\ell}^n$  by induction on $n$. The result holds for  the base case $n=1$, as already observed in \eqref{cqn1}. Assume it holds for $n$, then, by also using \eqref{Dcq}, we compute
\begin{align*}
\cC_q \rhd (\y{\ell}^{n+1}) 
& =  
(\cC_q \rhd \y{\ell}^n)  \y{\ell} + \y{\ell}^n (\cC_q\rhd  \y{\ell}) + q^{-1} (E \rhd \y{\ell}^n) (F \rhd  \y{\ell}) + (F\rhd \y{\ell}^n)(E\rhd  \y{\ell}) 
\\
& = [n][n+1] \y{\ell}^{n+1}+ [2] \y{\ell}^{n+1} -q^{-\frac{n}{2}} \eta^2 [n] \y{\ell}^{n-1} \u{\ell}{3} \u{\ell}{1} - q^{\frac{n}{2}} \eta^2 [n] \y{\ell}^{n-1} \u{\ell}{1}\u{\ell}{3} 
\end{align*}
where $\eta^2=[2]= q^{-\half} (1+q)$ and 
$$
\u{\ell}{1}\u{\ell}{3} = -q^{\frac{3}{2}} (1+q)^{-1} \y{\ell}^2  \, , \quad
\u{\ell}{3} \u{\ell}{1}  = -q^{-\half} (1+q)^{-1} \y{\ell}^2 \; ,
$$
as from the expressions in Appendix \ref{app:B}. 
We hence obtain that $\y{\ell}^{n+1}$ is an eigenfunction of $\cC_q$
with eigenvalue
\begin{multline*}
[n][n+1] + [2] 
+q^{-\frac{n+1}{2}} [2] [n]  (1+q)^{-1} 
+ q^{\frac{n+3}{2}} [2] [n] (1+q)^{-1} =
\\
= [n][n+1] + [2] 
+q^{-\frac{n+2}{2}}  [n] 
+ q^{\frac{n+2}{2}}  [n] \; .
\end{multline*}
Next, by explicit computation one verifies that 
$$
 [2] +q^{-\frac{n+2}{2}}  [n] + q^{\frac{n+2}{2}}  [n] 
=
[n+1] ([2][n+1]-2 [n])
$$
so that, finally,
$$
\cC_q \rhd (\y{\ell}^{n+1})= 
[n+1] \big([n]+[2][n+1]- 2[n]\big)  \y{\ell}^{n+1}
= [n+1][n+2]\,  \y{\ell}^{n+1}
$$
where we have used the property  $[2][n+1]- [n]=[n+2]$ of $q$-numbers.
\end{proof}

The above analysis is valid when $q$ is real and for the dual $*$-algebras $\uqh(su(2))$ and $\asph$. 
The  more complicate case $|q|=1$ that involves unbounded representations of $\uqh(sl_{2}(\IR))$ \cite{Sc96} will be studied elsewhere.

\appendix

\section{Proof of Proposition \ref{pro:cof} }\label{app:A}

From the definition \eqref{cofactors}, we are left to show that $\sum_m \u{d}{m} \hu{m}{a}=0$,
for all indices $a\neq d$.
Notice that for each index $a=1,2,3$ (and for each $m$) we can always choose an expression of the cofactor
$
\hu{m}{a}= \varepsilon_{abc}^{-1} \sum_{n,p} \varepsilon_{mnp}  \u{b}{n}\u{c}{p}  $
for which $a,b,c$ are all different. So either $d=b$ or $d=c$.  
Without loss of generality we can take $d=b$ (that is, of the two equivalent expressions of the cofactor with $a\neq b \neq c$ we can take the one where the index $b$ is equal to $d$). Thus,  fixing mutually different indices $a, b=d, c$, we compute
\begin{align}\label{da}
\varepsilon_{adc} \, \sum_m \u{d}{m} \hu{m}{a} &=
\sum_{m,n,p} \varepsilon_{mnp}  \u{d}{m} \u{d}{n}\u{c}{p}  
\nn \\
&= \sum_{m,n} \varepsilon_{mn1}  \u{d}{m} \u{d}{n}\u{c}{1}   
+ \sum_{m,n} \varepsilon_{mn2}  \u{d}{m} \u{d}{n}\u{c}{2}
+ \sum_{m,n} \varepsilon_{mn3}  \u{d}{m} \u{d}{n}\u{c}{3}  
\nn
\\
&= q(\u{d}{2} \u{d}{3}-q\u{d}{3} \u{d}{2})\u{c}{1}   
- q(\u{d}{1} \u{d}{3}-\u{d}{3} \u{d}{1}+  (q^\half -q^{-\half}) \u{d}{2}\u{d}{2}   )\u{c}{2}    \nn
\\
& \quad + (\u{d}{1} \u{d}{2}-q\u{d}{2} \u{d}{1})\u{c}{3} \; . 
\end{align}
We then use equation \eqref{rtt-exp} for elements $\u{d}{m}$ on the same row:
\begin{align} \label{da0}
q^{1-\delta_{d2}} \u{d}{m}  \u{d}{n}  = &~  q^{\delta_{mn}-\delta_{mn'}}  \u{d}{n} \u{d}{m} 
+ \lambda \theta(n-m) \u{d}{m}  \u{d}{n} + 
\delta_{d2} \lambda   q^{-\half}\u{1}{m}  \u{3}{n} \nn \\ &  
- \lambda \delta_{nm'} \sum_k \theta(k-m)  q^{-\rho_m-\rho_{k'}}\u{d}{k'}  \u{d}{k} \, . 
\end{align}
For $d \neq 2$, this yields
\begin{align*}
q \u{d}{3}  \u{d}{2}  & =  \u{d}{2} \u{d}{3} \; , \quad
q \u{d}{2}  \u{d}{1}  =  \u{d}{1} \u{d}{2} \; ,  \\
q^2 \u{d}{3}  \u{d}{1}  & =  \u{d}{1} \u{d}{3} \; , \quad
(1+q^{-1}) \u{d}{1} \u{d}{3} = q^{-2} \u{d}{1} \u{d}{3} + q^{-1} \u{d}{3} \u{d}{1} - q^{-\half} \lambda \u{d}{2} \u{d}{2} . 
\end{align*}
The first two relations imply the vanishing of the (polynomial) coefficients of  $\u{c}{1} $ and $\u{c}{3}$. The last two when combined yield
$$
(1+q^{-1}) \u{d}{1} \u{d}{3} = (1+q^{-1})\u{d}{3} \u{d}{1} - (1+q^{-1})(q^\half -q^{-\half}) \u{d}{2}\u{d}{2}
$$
and 
the coefficient of  $\u{c}{2} $ vanishes as well.

\medskip

For $d=2$ the computation is more involved. Equation \eqref{da} becomes 
\begin{align}\label{da2}
\varepsilon_{a2c} \, \sum_m \u{2}{m} \hu{m}{a} & = q(\u{2}{2} \u{2}{3}-q\u{2}{3} \u{2}{2})\u{c}{1}   
- q(\u{2}{1} \u{2}{3}-\u{2}{3} \u{2}{1}+  (q^\half -q^{-\half}) \u{2}{2}\u{2}{2}   )\u{c}{2}    \nn
\\
& \quad + (\u{2}{1} \u{2}{2}-q\u{2}{2} \u{2}{1})\u{c}{3} \; ,
\end{align}
with the coefficients of the $\u{c}{p}$ that do not vanish, in contrast to the case $d=1,3$. We hence need to proceed differently:
the idea is to express the coefficients as polynomials in $\u{3}{k}\u{1}{j}$
for the case $c=1$ or as polynomials in $\u{1}{k}\u{3}{j}$ for the case $c=3$. 
We start with the coefficient of $\u{c}{1}$. The equation \eqref{da0} yields
\begin{align*}
\u{2}{3} \u{2}{2} &= \u{2}{2} \u{2}{3} +  q^{-\half} \lambda \u{1}{3} \u{3}{2} \\
(1+q^{-1}) \u{2}{2} \u{2}{3} & = \u{2}{3} \u{2}{2} + q \u{2}{2} \u{2}{3} + q^{-\half} \lambda \u{1}{2} \u{3}{3} .
\end{align*}
When combined, these yield
$$
\u{2}{2} \u{2}{3}-q\u{2}{3} \u{2}{2} = (q^\half -q^{-\half}) (\u{1}{2} \u{3}{3} - q \u{1}{3} \u{3}{2}) . 
$$
This can also be written as 
$$
\u{2}{2} \u{2}{3}-q\u{2}{3} \u{2}{2} = (q^\half -q^{-\half}) ( q \u{3}{3} \u{1}{2} - \u{3}{2} \u{1}{3}) 
$$
when using the commutation relations
$$
q^{-1} \u{1}{3}  \u{3}{2} = \u{3}{2}  \u{1}{3}  \; , \qquad 
q^{-1} \u{1}{2}  \u{3}{3} = \u{3}{3}  \u{1}{2} + \lambda \u{3}{2}  \u{1}{3}
$$
obtained from \eqref{rtt-exp}, for suitable choices of indices. 

Analogously, for the coefficient of $\u{c}{3}$, from equation \eqref{da0} we obtain
\begin{align*}
\u{2}{2} \u{2}{1} &= \u{2}{1} \u{2}{2} +  q^{-\half} \lambda \u{1}{2} \u{3}{1} \\
(1+q^{-1}) \u{2}{1} \u{2}{2} & = \u{2}{2} \u{2}{1} + q \u{2}{1} \u{2}{2} + q^{-\half} \lambda \u{1}{1} \u{3}{2} .
\end{align*}
When combined, these yield
$$
\u{2}{1} \u{2}{2}-q\u{2}{2} \u{2}{1} = (q^\half -q^{-\half}) (\u{1}{1} \u{3}{2} - q \u{1}{2} \u{3}{1}) . 
$$
This can also be written as 
$$
\u{2}{1} \u{2}{2}-q\u{2}{2} \u{2}{1} = (q^\half -q^{-\half}) ( q \u{3}{2} \u{1}{1} - \u{3}{1} \u{1}{2}) 
$$
when using the commutation relations
$$
q^{-1} \u{1}{2}  \u{3}{1} = \u{3}{1}  \u{1}{2}  \; , \qquad 
q^{-1} \u{1}{1}  \u{3}{2} = \u{3}{2}  \u{1}{1} + \lambda \u{3}{1}  \u{1}{2} .
$$
again obtained from \eqref{rtt-exp}, for suitable choices of indices.   

Finally, the coefficient of $\u{c}{2}$ in \eqref{da2} is proportional to the cofactor $\hu{2}{2}$:
\begin{align*}
\u{2}{1}\u{2}{3}-  \u{2}{3}\u{2}{1} & + (q^\half -q^{-\half}) \u{2}{2}\u{2}{2} = 
(q^\half -q^{-\half}) \hu{2}{2} \\
& =  (q^\half -q^{-\half}) \left[ \u{1}{1}\u{3}{3} - \u{1}{3}\u{3}{1}  + (q^\half -q^{-\half}) \u{1}{2}\u{3}{2} \right] \\
& = (q^\half -q^{-\half}) \left[ - \u{3}{1}\u{1}{3} + \u{3}{3}\u{1}{1}  - (q^\half -q^{-\half}) \u{3}{2}\u{1}{2} \right].
\end{align*}
We  then return to \eqref{da2}. For $c=1$ equation \eqref{da2}  reads
\begin{align*}
 -q^{2}\, \sum_m \u{2}{m} \hu{m}{3} 
 & = q (\u{2}{2} \u{2}{3}-q\u{2}{3} \u{2}{2})\u{1}{1} + (\u{2}{1} \u{2}{2}-q\u{2}{2} \u{2}{1})\u{1}{3}  \\
 & \quad - q \big( \u{2}{1} \u{2}{3}-\u{2}{3} \u{2}{1} + (q^\half -q^{-\half}) \u{2}{2}\u{2}{2} \big) \u{1}{2}    
 \\ 
 & = (q^\half -q^{-\half}) \big[ q ( q \u{3}{3} \u{1}{2} - \u{3}{2} \u{1}{3} ) \u{1}{1} + ( q \u{3}{2} \u{1}{1} - \u{3}{1} \u{1}{2}) \u{1}{3} 
\\ & \quad -q \big( - \u{3}{1}\u{1}{3} + \u{3}{3}\u{1}{1}  - (q^\half -q^{-\half}) \u{3}{2}\u{1}{2} \big)\u{1}{2} \big] \\
& = q (q^\half -q^{-\half}) \, \u{3}{2} \big[ - \u{1}{3} \u{1}{1} + \u{1}{1} \u{1}{3} + 
(q^\half -q^{-\half}) \u{1}{2}\u{1}{2} \big] ,
\end{align*}
where in the last equality we have used 
$$
\u{1}{2}\u{1}{1}  =q^{-1} \u{1}{1}\u{1}{2} \quad \text{and} \quad
\u{1}{3}\u{1}{2}  =q^{-1} \u{1}{2}\u{1}{3} \; , \quad
$$
obtained once again from \eqref{rtt-exp}. From \eqref{rtt-exp} we also obtain
$$
\u{1}{3}\u{1}{1}  =q^{-2} \u{1}{1}\u{1}{3} \quad \text{and} \quad 
(1+ q^{-1}) \u{1}{1} \u{1}{3} = q^{-1} \u{1}{3} \u{1}{1} + q^{-2} \u{1}{1} \u{1}{3} - q^{-\half} \lambda \u{1}{2} \u{1}{2} 
$$
which, when combined, give
$$
\u{1}{1} \u{1}{3} = \u{1}{3} \u{1}{1} - (q^\half -q^{-\half}) \u{1}{2}\u{1}{2} 
$$
and then $\sum_m \u{2}{m} \hu{m}{3} = 0$.   

Similarly, for $c=3$ equation \eqref{da2} reads
\begin{align*}
 \sum_m \u{2}{m} \hu{m}{1} & = q(\u{2}{2} \u{2}{3}-q\u{2}{3} \u{2}{2})\u{3}{1} + (\u{2}{1} \u{2}{2}-q\u{2}{2} \u{2}{1})\u{3}{3} 
\\ & - q \big(\u{2}{1} \u{2}{3}-\u{2}{3} \u{2}{1} + (q^\half -q^{-\half}) \u{2}{2}\u{2}{2} \big) \u{3}{2}   
\\ & = 
(q^\half -q^{-\half}) \big[ q ( \u{1}{2} \u{3}{3} - q \u{1}{3} \u{3}{2} ) \u{3}{1} + ( \u{1}{1} \u{3}{2} - q \u{1}{2} \u{3}{1}) \u{3}{3} 
\\ & \quad - q \big( \u{1}{1}\u{3}{3} - \u{1}{3}\u{3}{1} + (q^\half -q^{-\half}) \u{1}{2}\u{3}{2} \big) \u{3}{2} \\
& = q (q^\half -q^{-\half}) \, \u{1}{2} \big[ \u{3}{3} \u{3}{1} - \u{3}{1} \u{3}{3} - (q^\half -q^{-\half}) \u{3}{2}\u{3}{2} \big] ,
\end{align*}
where in the last equality we have used 
$$
\u{3}{2}\u{3}{1}  =q^{-1} \u{3}{1}\u{3}{2} \; , \quad
\u{3}{3}\u{3}{2}  =q^{-1} \u{3}{2}\u{3}{3} \; , \quad
$$
obtained once again from \eqref{rtt-exp}. From \eqref{rtt-exp} we also obtain
$$
\u{3}{3}\u{3}{1}  =q^{-2} \u{3}{1}\u{3}{3} \quad \text{and} \quad 
(1+ q^{-1}) \u{3}{1} \u{3}{3} = q^{-1} \u{3}{3} \u{3}{1} + q^{-2} \u{3}{1} \u{3}{3} - q^{-\half} \lambda \u{3}{2} \u{3}{2} 
$$
which, when combined, give
$$
\u{3}{1} \u{3}{3} = \u{3}{3} \u{3}{1} - (q^\half -q^{-\half}) \u{3}{2}\u{3}{2} 
$$
and then $\sum_m \u{2}{m} \hu{m}{1} = 0$.  
This concludes the proof of Prop. \ref{pro:cof}.

\section{Commutation relations in $\oq{3}$}\label{app:cr}

In this appendix, we compute explicitly the commutation relations \eqref{rtt-exp} 
among the generators $\u{i}{j}$ of the algebra 
$\oq{3}$, for $j=1,3$,  which we need for computing the coinvariant elements in Proposition \ref{prop:B}.

As before
$\lambda= q - q^{-1}$, and $\rho_1=\half$, $\rho_2=0$, $\rho_3=-\half$.  Moreover, for each index $k=1,2,3$, $k'=3-k$ so that  $1'=3$, $2'=2$ and $3'=1$.

\subsubsection*{Commutation relations $\u{i}{1}$ $\u{j}{1}$}
For $m=n=1$,  equation \eqref{rtt-exp} reduces to
$$
q^{\delta_{ij}-\delta_{ij'}} \u{j}{1}  \u{i}{1}  = ~  (q - \lambda \theta(j-i) ) \u{i}{1} \u{j}{1} 
  + \lambda \delta_{ij'} \sum_k \theta(j-k)  q^{-\rho_i-\rho_k}\u{k}{1}  \u{k'}{1}  \, , 
$$
from which  
\beq
\begin{array}{lll}
\u{2}{1}  \u{1}{1}  =   q^{-1}  \u{1}{1} \u{2}{1} \, , \qquad
&
\u{3}{1}  \u{1}{1}  =  q^{-2} \u{1}{1} \u{3}{1}    \, , 
\\
 \u{3}{1}  \u{2}{1}  = ~  q^{-1} \u{2}{1} \u{3}{1} \, , \qquad
&
(\u{2}{1})^2  =  -q^{-\frac{3}{2}} (1+ q)\u{1}{1}  \u{3}{1}   \, .
\end{array}
\eeq

\subsubsection*{Commutation relations $\u{i}{3}$ $\u{j}{3}$}
For $m=n=3$,  equation \eqref{rtt-exp} has an  expression analogous to that for $m=n=1$:
$$
q^{\delta_{ij}-\delta_{ij'}} \u{j}{3}  \u{i}{3}  = ( q - \lambda  \theta(j-i) ) \u{i}{3} \u{j}{3} 
+ \lambda \delta_{ij'} \sum_k \theta(j-k)  q^{-\rho_i-\rho_k}\u{k}{3}  \u{k'}{3}   
$$
and one has 
\beq
\begin{array}{lll}
\u{2}{3}  \u{1}{3}  =   q^{-1}  \u{1}{3} \u{2}{3} \, , \qquad
\u{3}{3}  \u{1}{3}  =  q^{-2} \u{1}{3} \u{3}{3}    \, , 
\\
 \u{3}{3}  \u{2}{3}  = ~  q^{-1} \u{2}{3} \u{3}{3} ~ \, , \qquad
(\u{2}{3})^2  =  -q^{-\frac{3}{2}} (1+ q)\u{1}{3}  \u{3}{3}   \, , 
\end{array}
\eeq

\subsubsection*{Commutation relations $\u{i}{1}$ $\u{j}{3}$}
For $m=3$ and $n=1$,  equation \eqref{rtt-exp} gives
$$
q^{-1}  \u{i}{1} \u{j}{3} =
q^{\delta_{i j}-\delta_{i j'}} \u{j}{3}  \u{i}{1}  
+ \lambda\theta(j-i)\u{i}{3}  \u{j}{1} 
- \lambda \delta_{i j'} \sum_k \theta(j-k)  q^{-\rho_i-\rho_k}\u{k}{3}  \u{k'}{1}   
$$
from which
\begin{align}\label{com-rel13}
& 
\u{1}{3}  \u{1}{1} =q^{-2}  \u{1}{1} \u{1}{3} \, ,  \qquad 
\u{2}{1} \u{1}{3} =q ~ \u{1}{3}  \u{2}{1}  \, ,  \qquad
 \u{2}{3}  \u{1}{1}  =  q^{-1} \u{1}{1} \u{2}{3}  -  \lambda \u{1}{3}  \u{2}{1} \, ,  \nn \\
&
\u{2}{3}  \u{2}{1} = q^{-1} \u{2}{1} \u{2}{3} + q^{-\half} \lambda  \u{1}{3}  \u{3}{1} \, , \qquad
\u{3}{1} \u{2}{3} = q~\u{2}{3}  \u{3}{1}\, ,   \qquad 
\u{3}{1} \u{1}{3} =\u{1}{3}  \u{3}{1}  \, , \nn \\
&  
 \u{3}{3}  \u{1}{1}  = \u{1}{1} \u{3}{3} + (1-q^{-1}) \lambda  \u{1}{3}  \u{3}{1} 
+ \lambda   q^{-\half}  \u{2}{1}  \u{2}{3}  \, , \nn \\
&
\u{3}{3}  \u{2}{1}  = q^{-1} \u{2}{1} \u{3}{3}  - \lambda \u{2}{3}  \u{3}{1} \, , \qquad
 \u{3}{3}  \u{3}{1} =q^{-2} \u{3}{1} \u{3}{3} 
\end{align}

The quotient algebra of $\mathcal{O}(R)$ by the ideal generated by $Q_q-1$
 gives the algebra $\oq{3}$, where, as from\eqref{condq}, $Q_q$ can equivalently be expressed in terms of any index $j$ as 
$$
Q_q= \sum_k q^{\rho_j-\rho_k}  \u{k}{j}  \u{k'}{j'} = \sum_k q^{\rho_j-\rho_k} \u{j}{k} \u{j'}{k'} .
$$ 
Explicitly
\begin{align*}
Q_q &=    \u{1}{1}  \u{3}{3} +
q^{\half}  \u{2}{1}  \u{2}{3}
+ q\u{3}{1}  \u{1}{3}
=  \u{1}{1} \u{3}{3} 
+q^{\half} \u{1}{2} \u{3}{2} 
+ q \u{1}{3} \u{3}{1} 
\\
&=  q^{-\half}  \u{1}{2}  \u{3}{2} +
 \u{2}{2}  \u{2}{2}
+q^{\half}  \u{3}{2}  \u{1}{2}
= q^{-\half} \u{2}{1} \u{2}{3} 
+ \u{2}{2} \u{2}{2} 
+q^{\half} \u{2}{3} \u{2}{1} 
\\
&=  q^{-1}  \u{1}{3}  \u{3}{1} +
q^{-\half}  \u{2}{3}  \u{2}{1}
+  \u{3}{3}  \u{1}{1}
= q^{-1} \u{3}{1} \u{1}{3} 
+q^{-\half} \u{3}{2} \u{1}{2} 
+ \u{3}{3} \u{1}{1} \; ,
\end{align*}
the diagonal entries of the matrices $S(u)u$ and $uS(u)$.

\section{Cofactors and coinvariant elements}\label{app:B}

We  list all the cofactors of the elements of the defining matrix $u$: 
\begin{align*}
\hu{1}{1}&=  \u{2}{2}\u{3}{3}  - q\u{2}{3}\u{3}{2} =
-q^{-1}  \u{3}{2}\u{2}{3}+ \u{3}{3}  \u{2}{2}
\\
\hu{2}{1}&= -q  \u{2}{1}\u{3}{3} + q \u{2}{3} \u{3}{1}  -q   (q^\half -q^{-\half}) \u{2}{2}\u{3}{2}
\\
&=
 \u{3}{1}\u{2}{3} - \u{3}{3} \u{2}{1}  +   (q^\half -q^{-\half}) \u{3}{2}\u{2}{2} 
\\
\hu{3}{1}&= q \u{2}{1}\u{3}{2}  -q^2 \u{2}{2}\u{3}{1}=
- \u{3}{1}\u{2}{2} +q \u{3}{2}\u{2}{1}
\end{align*}
together with
\begin{align*}
\hu{1}{2}&= -q^{-1} \u{1}{2}\u{3}{3} +\u{1}{3}\u{3}{2} =
q^{-1} \u{3}{2}\u{1}{3} - \u{3}{3}  \u{1}{2}
\\
&=
-q^{-1} (q^\half -q^{-\half})^{-1} (\u{2}{2}\u{2}{3}- q \u{2}{3}\u{2}{2} )
\\
\hu{2}{2}&=  \u{1}{1}\u{3}{3}-  \u{1}{3}\u{3}{1}  + (q^\half -q^{-\half}) \u{1}{2}\u{3}{2} =
- \u{3}{1}\u{1}{3}+  \u{3}{3}\u{1}{1}  - (q^\half -q^{-\half}) \u{3}{2}\u{1}{2} 
\\
&=
(q^\half -q^{-\half})^{-1} 
(\u{2}{1}\u{2}{3}-  \u{2}{3}\u{2}{1} + (q^\half -q^{-\half}) \u{2}{2}\u{2}{2} )
\\
\hu{3}{2}&= -\u{1}{1}\u{3}{2} +q \u{1}{2}\u{3}{1} =
q^{-1} \u{3}{1}\u{1}{2} - q \u{3}{2}  \u{1}{1}
\\
&=
(q^\half -q^{-\half})^{-1} (- \u{2}{1}\u{2}{2}+ q \u{2}{2}\u{2}{1} )
\end{align*}
and finally
\begin{align*}
\hu{1}{3}&=  q^{-1} \u{1}{2}\u{2}{3}  - \u{1}{3}\u{2}{2} =
-q^{-2}  \u{2}{2}\u{1}{3}+q^{-1} \u{2}{3}  \u{1}{2}
\\
\hu{2}{3}&= - \u{1}{1}\u{2}{3} + \u{1}{3} \u{2}{1}  -  (q^\half -q^{-\half}) \u{1}{2}\u{2}{2}
\\
&=
q^{-1} \u{2}{1}\u{1}{3} - q^{-1}\u{2}{3} \u{1}{1}  +  q^{-1} (q^\half -q^{-\half}) \u{2}{2}\u{1}{2} 
\\
\hu{3}{3}&= \u{1}{1}\u{2}{2}  -q \u{1}{2}\u{2}{1}=
-q^{-2} \u{2}{1}\u{1}{2} +q^{-1} \u{2}{2}\u{1}{1} \, .
\end{align*}

\medskip
Next, we  list all quadratic coinvariant elements $\u{i}{3}\u{j}{1}$ and $\u{i}{1}\u{j}{3}$  as polynomials in the elements of the second column $\u{k}{2}=:\y{k}$. 
From the proof of Proposition \ref{prop:B} we have 
\begin{align*}
 & \u{1}{3}   \u{1}{1} = -q^{-\half} (1+q)^{-1} \y{1}^2 \, , 
\qquad
\u{1}{3} \u{2}{1}=  q^{-\half} (1+q)^{-1} ~ \y{1} \left(1-\y{2} \right) \, , 
\\
& \u{1}{3} \u{3}{1}=  (1+q)^{-1}(1-\y{2} - q^{-\half} \y{1}\y{3}) ,
\qquad  \u{2}{3} \u{1}{1} = - q^{\half} (1+q)^{-1}  (1 + q^{-1} \y{2}) \y{1} \, , 
\\
& \u{2}{3} \u{2}{1} =\y{3} \y{1} \, ,
\\
& \u{2}{3}  \u{3}{1} = q^{-\half} (1+q)^{-1}  \left( 1 -  \y{2} \right) \y{3} \, , 
\qquad \u{3}{3}\u{1}{1} = (1+q)^{-1} (q+\y{2} - q^{-\half}\y{3}\y{1})
\\
& \u{3}{3} \u{2}{1}=  - q^{-\half} (1+q)^{-1} ~\y{3} \left(q + \y{2}\right) \, , 
\qquad \u{3}{3}   \u{3}{1} = -q^{-\half} (1+q)^{-1} \y{3}^2 \; .
\end{align*}

Formulas for the elements  $\u{i}{1}\u{j}{3}$ are recovered by using \eqref{cr13}, or explicitly \eqref{com-rel13}, and 
also the commutation relations  \eqref{comm-rel-y} 
\begin{eqnarray*}
&&\y{3} (\y{2}-1) = q^{-1} (\y{2}-1) \y{3}   \, , 
\qquad \y{1} (\y{2}-1) = q (\y{2}-1)  \y{1} \, , 
\\
&& q \y{3} \y{1} = q^{-1} \y{1} \y{3} + (q^{-\frac{1}{2}}-q^{\frac{1}{2}}) (\y{2}-1)
\end{eqnarray*}
or equivalently
\begin{eqnarray*}
&&\y{3} \y{2} = q^{-1} \y{2} \y{3} +(1-q^{-1}) \y{3}  \, , 
\qquad \y{2} \y{1}= q^{-1} \y{1} \y{2} +(1-q^{-1}) \y{1}  \, ,
\\
&&\y{3} \y{1} = q^{-2} \y{1} \y{3} + q^{-\frac{3}{2}}(1-q) (\y{2}-1)
\end{eqnarray*}
with \eqref{quadric-y}: $q^{-\half}\y{1} \y{3} +q^{\half} \y{3} \y{1} +\y{2}^2=1$.
Finally for the remaining coinvariant elements
\begin{align*}
& \u{1}{1}   \u{1}{3} = -q^{\frac{3}{2}} (1+q)^{-1} \y{1}^2 \, , 
\qquad \u{1}{1} \u{2}{3}= - q^\half (1+q)^{-1} \y{1} \left( 1+q~ \y{2} \right) \, ,
\\
 &\u{1}{1} \u{3}{3} = (1+q)^{-1}  (1+ q \y{2}- q^{\frac{3}{2}} \y{1}\y{3})
\end{align*}
\begin{align*}
&\u{2}{1}  \u{1}{3} = ~ q^{\half} (1+q)^{-1} \y{1} \left(1- \y{2} \right) \, ,
\qquad \u{2}{1} \u{2}{3} =\y{1} \y{3} 
\\
& \u{2}{1} \u{3}{3} = -q^{\half} (1+q)^{-1} (1 + q ~ \y{2}) \y{3} 
\end{align*}
\begin{align*}
& \u{3}{1}\u{1}{3} = (1+q)^{-1}(1-\y{2} - q^{-\half} \y{1}\y{3})
\\
&\u{3}{1} \u{2}{3} = q^\frac{1}{2} (1+q)^{-1} ~ \left(1 - \y{2} \right) \y{3} \, ,
\qquad \u{3}{1}   \u{3}{3} = -q^{\frac{3}{2}} (1+q)^{-1} \y{3}^2 \, .
\end{align*}

\bigskip\bigskip
\noindent
\textbf{Acknowledgments.}~\\[.5em] 
GL is grateful to Irene Sabadini for the nice invitation at the 
 ICCA13 Conference in Holon, Israel in June 2022 and to Elena Luna for the great hospitality in Holon. He also thanks all other organisers and participants for the very pleasant time at the conference. \\
GL acknowledges partial support from INFN, Iniziativa Specifica GAST
and from INdAM-GNSAGA.
GL acknowledges support from PNRR MUR projects PE0000023-NQSTI.\\
CP is grateful to the Department of Mathematics, Informatics and Geosciences of Trieste University for the hospitality.  CP was partially supported by COST Actions CaLISTA CA 21109 and CaLIGOLA MSCA-2021-SE-01-101086123, and from INdAM-GNSAGA.


\begin{thebibliography}{10}


\bibitem{AKL16}
F. Arici, J. Kaad, G. Landi,
{\it Pimsner algebras and Gysin sequences from principal circle actions},
J. Noncommut. Geom. 10 (2016) 29--64.


\bibitem{dij} M.S. Dijkhuizen, 
\emph{The double covering of the quantum group $SO_q(3)$}, 
Rend. Circ. Mat. Palermo Serie II 37 (1994), 47--57.

\bibitem{dv07}
M. Dubois-Violette,
\emph{Multilinear forms and graded algebras}, Journal of Algebra 317 (2007) 198--225.

\bibitem{H00}
P.M. Hajac, {\it Bundles over quantum sphere and noncommutative
index theorem}, K-Theory 21 (2000) 141--150.

\bibitem{Sc96}   K.  Schm\"udgen \emph{Operator representations of $\uqh(sl_2(\IR))$}, 
Lett. Math. Phys. 37 (1996) 211--222.

\bibitem{ks} A. Klimyk, K.  Schm\"udgen,
\emph{Quantum groups and their  representations}, Springer-Verlag Berlin Heidelberg 1997.

\bibitem{lrz} G. Landi, C. Reina, A. Zampini,
\emph{Gauged Laplacians on quantum Hopf bundles},
Commun. Math. Phys. 287 (2009) 179--209.

\bibitem{MNW} T. Masuda, Y. Nakagami, J. Watanabe,
\emph{Noncommutative differential geometry on the quantum two-sphere
  of P.~Podle{\'s}. I: An algebraic viewpoint}, K-Theory 5 (1991)
151--175.

\bibitem{NvO82}
C. Nastasescu, F. van Oystaeyen, \emph{Graded Ring Theory}, Elsevier, 1982.

\bibitem{po87} P.  Podle\'s, 
\emph{Quantum spheres}, Lett. Math. Phys. 14 (1987) 193--202.

\bibitem{po95} P.  Podle\'s, 
\emph{Symmetries of Quantum Spaces. Subgroups 
and Quotient Spaces of Quantum $SU(2)$ and $SO(3)$ Groups},
Commun. Math. Phys. 170 (1995) 1--20.

\bibitem{frt}  L.D.  Faddeev, N.Yu. Reshetikhin, L.A. Takhtadzhyan, 
\emph{Quantization of Lie groups and Lie algebras},  Leningrad Math.  J.  1 (1990) 193--225.

\end{thebibliography}
\end{document}